\tikzset{
  treenode/.style = {shape=rectangle, rounded corners,
                     draw, align=center,
                     top color=white, bottom color=blue!20},
  root/.style     = {treenode, font=\Large, bottom color=red!30},
  env/.style      = {treenode, font=\ttfamily\normalsize},
  dummy/.style    = {circle,draw}
}
\newcommand{\R}{\mathbb{R}\,}
\title{Data-driven Tensor Train Gradient Cross Approximation for Hamilton-Jacobi-Bellman Equations}
\author{
Sergey Dolgov\thanks{Department of Mathematical Sciences, University of Bath, North Rd, BA2 7AY Bath, United Kingdom ({\tt sd901@bath.ac.uk})}
\and
Dante Kalise\thanks{Department of Mathematics, Imperial College London, South Kensington Campus, SW7 2AZ London, United Kingdom ({\tt dkaliseb@imperial.ac.uk})}
\and
Luca Saluzzi\thanks{Department of Mathematics, Imperial College London, South Kensington Campus, SW7 2AZ London, United Kingdom ({\tt lsaluzzi@ic.ac.uk})}}
\begin{document}

\maketitle

\begin{abstract}
A gradient-enhanced functional tensor train cross approximation method for the resolution of the Hamilton-Jacobi-Bellman (HJB) equations associated to optimal feedback control of nonlinear dynamics is presented.
The procedure uses samples of both the solution of the HJB equation and its gradient to obtain a tensor train approximation of the value function. The collection of the data for the algorithm is based on two possible techniques: Pontryagin Maximum Principle and State-Dependent Riccati Equations. Several numerical tests are presented in low and high dimension showing the effectiveness of the proposed method and its robustness with respect to inexact data evaluations, provided by the gradient information.
The resulting tensor train approximation paves the way towards fast synthesis of the control signal in real-time applications.
\end{abstract}
\begin{keywords}
Dynamic Programming, Optimal Feedback Control, Hamilton-Jacobi-Bellman
Equations, Tensor Decomposition, High-dimensional Approximation
\end{keywords}
\begin{AMS}
15A69, 15A23, 65F10, 65N22, 49J20, 49LXX, 49MXX
\end{AMS}

\section{Introduction}

Stabilization of nonlinear dynamical systems is a fundamental problem in control theory, with applications in mechanical systems, chemical engineering, and fluid flow control, among many other areas. Nonlinear stabilization is often approached by means of feedback (closed-loop) controllers which, in contrast to open-loop controls, offer enhanced stability properties with respect to external disturbances. The synthesis of optimal feedback controls resorts to the use of dynamic programming, which characterizes the optimal feedback law in terms of the solution of a Hamilton-Jacobi-Bellman (HJB) nonlinear Partial Differential Equation (PDE). 
The main drawback for this approach lies on the fact that the HJB equation must be solved on the state space of the dynamical system, often leading to solving a nonlinear PDE in arbitrarily high dimensions. This limitation is referred to as the \textit{curse of dimensionality}, a term coined by Richard Bellman in the '60s and still an active subject of research.
Under some specific structural assumptions, as in the case of linear dynamics and a quadratic cost functional, the HJB equation is equivalent to the matrix Algebraic Riccati Equation (ARE), for which high-dimensional solvers are readily available \cite{Kirsten_Simoncini_2020,
BBKS_2020}. Unfortunately, for the fully nonlinear setting, no reformulation is possible and the HJB PDE must be solved directly. In this direction, over the last years there has been a significant progress on the solution of high-dimensional HJB PDEs arising in optimal control, including max-plus algebra methods \cite{McEneaney_2007,Akian_Gaubert_Lakhoua_2008,maxplusdarbon}, sparse grids \cite{GK16}, tree-structure algorithms \cite{Alla_Saluzzi_2020,falcone2023approximation}, applications of artificial neural networks \cite{Han_Jentzen_E_2018,Darbon_Langlois_Meng_2020,Kunisch_Walter_2021,sympocnet,Zhou_2021,Onken2021,ruthotto2020machine} and regression-type methods
in tensor formats \cite{sallandphd,richter2021solving}.
The above mentioned techniques can scale up to very high-dimensional HJB PDEs, however, the effective implementation of real-time HJB-based controllers remains an open problem. 

In this work we develop a data-driven approach based on the knowledge of the value function and its gradient on sample points. Similar ideas have been proposed in \cite{Kang_Wilcox_2017,Kang_Wilcox_2015} in the framework of sparse grids, in \cite{Azmi:2021} with sparse polynomial regression, in \cite{oss-hjbt-2021,sallandphd} via tensor train representation and Monte Carlo quadrature, and in \cite{Nakamura_Zimmerer_2021,Nakamura_Zimmerer_2021b,bensnn,onken2021neural} using supervised learning and deep neural networks. The aforementioned works exploit the link between the HJB equation and Pontraygin's Maximum Principle (PMP), a first-order optimality condition which is interpreted as a characteristic curve of the HJB PDE. The latter is used to generate synthetic data for the solution of the HJB equation, whose global solution is then recovered by supervised learning. A similar idea is proposed in \cite{ABK21} where the authors propose a sub-optimal feedback law obtained via a feedforward neural network. In this case the training set is generated via the State-Dependent Riccati Equation (SDRE) strategy \cite{Banks_Lewis_Tran_2007,609663,allasdre}, an extension of the Riccati solution to nonlinear dynamics.

Generally speaking, the proposed methodology belongs to a class of surrogate models, where instead of solving numerically expensive PMP and SDRE problems for each given state of the system, an \emph{offline} approximation of the entire value function is pre-computed in a compressed storage format.
This format is then used for a cheap \emph{online} evaluation of an approximate control signal for any given state of the system, which allows one to control the system in real time even on a low-performance device (e.g. FPGA).
In this paper we propose to approximate the value function together with its gradient in a tensor product decomposition, computed via adaptive sampling of either PMP or SDRE.
We show that sampling from this pre-built tensor format of the value function is 100 times faster than the online computation of SDRE solutions, effectively enabling real-time control synthesis.

Tensor decompositions have emerged as efficient approximation techniques for high dimensional tensors \cite{hackbusch-2012,larskres-survey-2013},
and, when such tensors encapsulate expansion coefficients of functions in a structured basis, multivariate functions \cite{Marzouk-stt-2016,Gorodetsky-ctt-2019}.
The idea behind tensor decompositions is to approximate a given tensor (or a function) by separation of variables.
Convergence of such decompositions for functions of certain regularity \cite{uschmajew-approx-rate-2013}, or particular classes of value functions \cite{DKK21} has been established.
Hierarchical tensor decompositions \cite{hackbusch-2012}, in particular the Tensor Train decomposition \cite{osel-tt-2011}, have become most widely used due to efficient and numerically stable computational algorithms.
Those can typically be written in a recursive form that scales linearly with the dimension.
The main workhorse is the Alternating Linear Scheme \cite{holtz-ALS-DMRG-2012,ds-amen-2014}, an analog of the coordinate gradient descent, which optimizes a desired objective function by iterating over the tensor decomposition factors, computing only one factor at a time.

This optimization framework can be used for solving regression problems, such as the Variational Monte Carlo \cite{oss-hjb0-2019,Eigel-VMC-2019,oss-hjbt-2021,foss-exittime-2020}.
In this case, one draws random samples from the sought function, and seeks for its tensor approximation by minimizing the misfit on the given samples over the elements of the tensor factors.
This problem is also called Tensor Completion \cite{KSV-completion-2014,kramer-SALSA-2019}.
In addition to the straightforward coordinate descent,
one can formulate the misfit optimization as a gradient flow on a Riemannian manifold of the tensor decomposition \cite{Steinlechner-completion-2016}, which turns out to be more accurate in a higher-error regime.

Although Tensor Completion works well for smooth functions and rapidly converging decompositions, the predefined sampling may miss localized but significant regions of a more irregular function.
Cross approximation methods \cite{ot-ttcross-2010,so-dmrgi-2011proc,grasedyck-par-cross-2015,sav-qott-2014} have been designed to adapt the sampling sets to minimize the conditioning of the interpolation problem, and to improve the approximation accuracy.
Moreover, the structure of the sampling sets in the cross methods is aligned to the structure of the sought tensor decomposition, which enables a more efficient linear algebra.

In addition to optimizing the locations of the data samples, one can assimilate more information per each sample.
In some problems (such as the optimal control considered here),
each evaluation of the sought function comes together with a value of the gradient of this function at little or no extra cost.
In this case, one can extend the regression problem such that weighted misfits in both the function and its gradient are minimized.
This allows one to take fewer samples for the same accuracy \cite{ro-grad-2018,ABK21}, or to achieve a higher accuracy for the same amount of samples.
The latter property becomes especially useful when the function values are noisy.
In this paper we develop an algorithm to solve the gradient-enhanced regression problem on a Tensor Train decomposition of the value function of an optimal control problem.

The contributions of this paper can be summarized as follows:
\begin{enumerate}
    \item We propose a framework for synthetic data generation for infinite horizon nonlinear stabilization problems based on the pointwise solution of State-Dependent Riccati Equations.
    \item We formulate a gradient-augmented supervised learning problem where a tensor train approximation of the value function is adaptively trained upon synthetic SDRE samples.
    \item We develop a two-box approach (based on the two ingredients above) to improve the accuracy in cases where stabilization towards the origin requires enhanced precision of the control law.
    \item We present a comprehensive computational assessment of the proposed methodology over high-dimensional nonlinear tests motivated by optimal control of multi-agent systems.
\end{enumerate}

The rest of the paper is structured as follows. In Section \ref{sec:iocp} we give a brief introduction on the infinite horizon optimal control problems and on the two techniques used to generate the dataset: the State-Dependent Riccati Equations and the finite horizon Pontryagin Maximum Principle. In Section \ref{sec:gc} we develop the construction of Gradient Cross for the approximation of the value function. Finally, in Section \ref{sec:nt} we will demonstrate the efficiency of the proposed algorithm in low and high dimensional numerical tests.

\section{The infinite horizon optimal control problem and suboptimal feedback laws}\label{sec:iocp}
In this section we formulate the deterministic infinite horizon problem for which we are interested in synthesizing a feedback law. We first present the optimal feedback synthesis using the Hamilton-Jacobi-Bellman formalism, to subsequently discuss suboptimal feedback laws which can be effectively cast in a data-driven environment. We consider system dynamics in control affine-form given by
\begin{equation}\label{eq}
\left\{ \begin{array}{l}
\dot{y}(s)=f(y(s)) + B(y(s))u(s), \;\; s\in(0,+\infty),\\
y(0)=x\in\mathbb{R}^d.
\end{array} \right.
\end{equation}
We denote by $y:[0,+\infty)\rightarrow\R^d$ the state of the system, by $u:[0,+\infty)\rightarrow\R^m$ the control signal and by $\mathcal{U}=L^\infty ([0,+\infty);U)$ the set of admissible controls where $U\subset \R^m$ is a compact set. We assume that the system dynamics $f:\R^d\rightarrow\R^d$ and $B:\R^d\rightarrow\R^d$ are $\mathcal{C}^1(\R^d)$ functions, verifying $f(\underline{0})=B(\underline{0})=\underline{0}$. Whenever we want to stress the dependence of the control signal from an initial state $x \in \mathbb{R}^d$, we will write $u(t,x)$. 

\noindent We consider the following  undiscounted infinite horizon cost functional:
\begin{equation}\label{cost}
 J(u(\cdot,x)):=\int_0^{+\infty} y(s)^{\top} Q y(s)+ u^{\top}(s) R u(s)\,ds\,,
\end{equation}
where $Q\in \R^{n\times n}$ is a symmetric positive semidefinite matrix and $R\in \R^{m\times m}$ is a symmetric positive definite matrix. Our goal is to synthesize an optimal control in feedback form, that is,  a control law that is fully determined upon the current state of the system. We begin by the defining the value function for a given initial condition $x \in\R^d$:
\begin{equation}
V(x):=\inf\limits_{u\in\mathcal{U}} J(u(\cdot,x))\,,
\label{VF}
\end{equation}
which, by standard dynamic programming arguments, satisfies the following Hamilton-Jacobi-Bellman PDE for every $x\in\R^d$
\begin{equation}\label{HJB}
\min\limits_{u\in U }\left\{(f(x)+B(x)u)^{\top} \nabla V(x) + x^{\top} Q x+ u^{\top} R u \right\}=0.
\end{equation}
%where $DV(x)$ stands for the gradient of the value function.
The HJB PDE \eqref{HJB} is challenging first-order fully nonlinear PDE cast over $\R^d$, where $d$ can be arbitrarily large, and thus intractable through conventional grid-based methods. However, in the unconstrained case, $i.e.$ $U=\R^m$, the minimizer of the l.h.s. of eq. \eqref{HJB} can be computed explicitly as
\begin{equation}\label{optc}
u^*(x)=-\frac{1}{2} R^{-1}B(x)^{\top} \nabla V(x)\,,
\end{equation}
leading to an unconstrained version of the HJB PDE given by
	\begin{align}\label{hjbu}
	\nabla V(x)^{\top}f(x)-\frac14\nabla V(x)^{\top}B(x)R^{-1}B(x)^{\top}\!\nabla V(x)\!+\!x^{\top}\!Qx=0\,.
\end{align}

In this work, we are interested in recovering an approximation of the optimal feedback law \eqref{optc} circumventing the solution of the high-dimensional HJB PDE \eqref{hjbu}. Instead, we will approximate $V(x)$ in a regression framework, assuming measurements from both the value function $V(x)$ and its gradient $\nabla V(x)$ are available at sampling points. The idea behind this approach resides in the fact that, for a given sample state $x$, the value function and its gradient can be recovered by minimizing the cost \eqref{cost} without resorting the HJB PDE, and independently from other sampling points. However, generating a  single sample  of $V(x)$ by solving the associated infinite horizon optimal control problem \eqref{cost} is already a computationally demanding task, unsuitable for the generation of a large-scale dataset. Instead, we propose two alternatives which trade optimality in minimizing \eqref{cost} by fast computability: a State-Dependent Riccati Equation (SDRE) approach and a finite horizon Pontryagin Maximum Principle (PMP) formulation. In this way, we generate a synthetic dataset which approximates the value function and its gradient, leading to a suboptimal, yet asymptotically stabilizing feedback law. 

\subsection{State-Dependent Riccati Equation}

Since the value function is a positive function, with no loss of generality it can be represented as
\begin{equation}
V(x)=x^{\top} \Pi(x) x,
\label{Vsdre}
\end{equation}
where $\Pi(x) \in \mathbb{R}^{n \times n}$ is a symmetric matrix-valued function with its gradient given by the following formula
\begin{equation}
\nabla V(x)=2  \Pi(x) x + 
\begin{pmatrix}
    x^{\top} \frac{d}{dx_1}\Pi(x) x\\
    \vdots\\
     x^{\top} \frac{d}{dx_d}\Pi(x) x
  \end{pmatrix} .
  \label{Dsdre}
 \end{equation}
In the particular case of a linear dynamics ($i.e.$ $A(x)=A$ and $B(x)=B$),  $\Pi(x)=\Pi$ is constant and it is well known that the HJB equation \eqref{hjbu} becomes the Algebraic Riccati Equation (ARE)
$$
A^{\top} \Pi + \Pi A - \Pi B R^{-1} B^{\top} \Pi+Q=0 .
$$
An intermediate parametrization can be considered in the nonlinear case, by writing the dynamics in semilinear form
\begin{equation}
\dot{y}= A(y(t)) y(t) +B(y(t)) u(t),
\end{equation}
In this case, eq. \eqref{hjbu} can be approximated as
\begin{equation}
A^{\top}(x) \Pi(x) + \Pi(x) A(x) - \Pi(x) B(x) R^{-1} B(x)^{\top} \Pi(x)+Q=0 \,,
\label{SDRE}
\end{equation}
which is obtained by applying the ansatz $V(x)=x^{\top} \Pi(x) x$ with a gradient approximation $\nabla V(x)=2  \Pi(x) x$, that is, by neglecting the second term in eq. \eqref{Dsdre} . The resulting equation is known as State-Dependent Riccati Equation (SDRE).
First, we note that the SDRE is a functional equation which must hold for every $x \in \mathbb{R}^d$, hence analytical solutions are only available in limited cases. However, under the assumption that the pair $(A(x),B(x))$ is stabilizable for all $x\in \mathbb{R}^d$, in \cite{Banks_Lewis_Tran_2007} the authors prove that the feedback law associated to the SDRE
\begin{equation}
u(x)=-R^{-1} B(x)^{\top} \Pi(x) x
\label{opt_control}
\end{equation}
is locally asymptotically stabilizing (that is, for states in a neighborhood of the origin). Since the SDRE is an approximation of the HJB PDE leading to an optimal feedback law, we claim that the SDRE control is a suboptimal feedback law. 

A natural implementation of the SDRE control law for nonlinear stabilization is through a receding horizon approach. In the SDRE setting, this means that given a current state $x^k$ of the trajectory, every matrix in eq.\eqref{SDRE} is frozen at $x^k$ and an algebraic Riccati equation is solved for $\Pi(x^k)$. Then, the resulting feedback law $u(x^k)$ from \eqref{opt_control} is applied to evolve the dynamics for a short horizon, until the next state $x^{k+1}$ where the computation is repeated. This implementation is feasible for low-dimensional dynamics, but becomes quickly unpractical as the number of states grows, as it requires the solution of large-scale algebraic Riccati equations at a very high rate. Here instead, we propose a supervised learning approach where eq. \eqref{SDRE} is used to generate a dataset to approximate $\Pi(x)$ offline, so that online feedback calculations are limited to the evaluation of the feedback law \eqref{opt_control}, which requires the evaluation of $\Pi(x)x$, or the approximate gradient of $V(x)$. The approximation of the value function using a functional tensor train format is discussed in detail in Section \ref{sec:ftt}.

When  approximating the value function via supervised learning, as presented in Section \ref{sec:gc}, we will augment our regression dataset with values of both $V(x)$ and its gradient. This computation relies on the formula \eqref{Dsdre}, which requires derivatives of the SDRE solution $\Pi(x)$.
Without loss of generality let us consider the case $B(x)=B$. Computing derivatives of \eqref{SDRE} with respect to a generic coordinate $x_i$ and denoting by $W=BR^{-1}B^{\top}$, we obtain the following Lyapunov equation for $\frac{d}{dx_i} \Pi(x)$
\begin{equation}
\frac{d}{dx_i} \Pi(x) \left(A(x)-W \Pi(x) \right) + \left(A(x)^{\top}-\Pi(x) W \right) \frac{d}{dx_i} \Pi(x) = -\frac{d}{dx_i} A(x)^{\top} \Pi(x) - \Pi(x) \frac{d}{dx_i} A(x), \quad i=1,\ldots, d.
\label{dSDRE}
\end{equation}
Therefore, for each sampled state $x$, the computation of $V(x)$ and its gradient requires the solution of one ARE (freezing $x$ in \eqref{SDRE}) and $d$ Lyapunov equations \eqref{dSDRE}, where $d$ is the dimension of the dynamical system.
If the matrix $A(x)$ does not depend on a variable $x_i$, its derivative with respect to that variable will be a null matrix and by \eqref{dSDRE} also the matrix $\frac{d}{dx_i} \Pi(x) = 0_d$, will be null. Hence, it will not contribute in the computation of the gradient of the value function. It will be sufficient to solve equation \eqref{dSDRE} just $k$ times, where $k$ is the number of variables appearing in $A(x)$.

\subsection{Pontryagin Maximum Principle}
Another option to generate an approximation of the infinite horizon value function is the use of Pontryagin's Maximum Principle (PMP). We refer to Chapter 5.3 in \cite{Kirk2004} for a complete description of this methodology. However, PMP provides first-order optimality conditions for a finite horizon optimal control problem. Since we are dealing with an infinite horizon, it is necessary to provide a final time $T$ for which the value function and its derivative decay to zero for every initial condition chosen in a reference domain.In this framework, the suboptimality originates from the truncation of the infinite horizon.For an initial condition $x$ and time horizon $T$,  introducing the adjoint variable $p:[0,T]\rightarrow\R^d$, the PMP system reads for \eqref{cost}
\newpage
\begin{equation}\label{pmp}
\left\{ \begin{array}{l}
\frac{d}{dt}y^*(t)=f(y(t)) + B(y^*(t))u^*(t), \\
y^*(0)=x,\\
-\frac{d}{dt}p_i^*(t)= \sum_{j=1}^d p^*_j(t) \partial_{y_i} (f_j(y^*(t))+B_j(y^*(t))u^*(t))+2(Qy^*)_i, \;\; i=1,\ldots,d,\\
p_i(T)=0 , \\
u^*(t)=-\frac{1}{2} R^{-1} B(y^*(t))^{\top} p^*(t) \,.
\end{array} \right.
\end{equation}

In \cite{Subbotina2006} the author shows that the optimal adjoint corresponds to the gradient of the value function along the optimal trajectory. Hence, the value function on the initial condition $x$ will be computed along the optimal trajectory and the optimal control
\begin{equation}
V(x)= \int_0^{+\infty} y^*(s)^{\top} Q y^*(s) + u^*(s)^{\top} R u^*(s) \, ds \approx \int_0^{\top} y^*(s)^{\top} Q y^*(s) + u^*(s)^{\top} R u^*(s) \, ds ,
\label{Vpmp}
\end{equation}
while its  gradient will be given by initial value of the adjoint, $i.e.$
\begin{equation}
\nabla V(x)\approx p(0).
\label{DVpmp}
\end{equation}

Equality \eqref{Vpmp} holds with the assumption that the optimal control $u^*(s)$ and the optimal trajectory $y^*(s)$ reached the zero level in the time interval $[0,T]$.
%Since the optimal control is given by \eqref{opt_control} and $p(t)=\nabla V(x(t))$ {\color{red} needs to be made more explicit using the control and adjoint, depends on what is your additional optimality condition in the PMP above}, the PMP system \eqref{pmp} can be rewritten as a two-point boundary value problem for the optimal pair $(y^*(t),p^*(t))$
%\begin{equation}\label{pmp2}
%\left\{ \begin{array}{l}
%\frac{d}{dt}y^*(t)=f(y^*(t)) -\frac{1}{2}B(y^*(t))R^{-1}B(y^*(t))^{\top} p^*(t), \\
%y^*(0)=x,\\
%-\frac{d}{dt}p_i^*(t)= \sum_{j=1}^n p^*_j(t) (\partial_{y_j} f_j(y^*(t)) -\frac{1}{2}\partial_{y_j}\left(B_j(y^*(t))R^{-1}B(y^*(t))^{\top} \right) p^*(t))+2(Qy^*(t))_i, \;\; i=1,\ldots,d,\\
%p_i(T)=0\,.
%\end{array} \right.
%\end{equation}

\subsubsection{Control constrained problem}
\label{constraint}
Unlike the SDRE approach, using PMP enables the addition of control constraints. For the sake of simplicity, we discuss the scalar control case, i.e. $m=1$, with a control signal restricted to an interval $[-u_{max},u_{max}]$. We are going to consider the same approach used in \cite{DKK21}. In order to impose the control constraints, let us choose the following penalty function 
$$
\mathcal{P}(u)=u_{max} \tanh(u/u_{max})
$$
 and let us change the control penalty term $u^{\top}Ru$ in the cost functional \eqref{cost} with the following term
\begin{equation}
W(u)= 2 R  \int_0^u \mathcal{P}^{-1}(\mu)\, d \mu.
\label{Wu}
\end{equation}

Note that the choice of this penalty function will keep the control in the interval $[-u_{max},u_{max}]$.

The corresponding PMP system becomes
\begin{equation}\label{pmpP}
\left\{ \begin{array}{l}
\frac{d}{dt}y^*(t)=f(y(t)) + B(y^*(t))\mathcal{P}(u^*(t)), \\
y^*(0)=x,\\
-\frac{d}{dt}p_i^*(t)= \sum_{j=1}^n p^*_j(t) \partial_{y_j} (f_j(y^*(t))+B_j(y^*(t))\mathcal{P}(u^*(t)))+2(Qy^*)_i, \;\; i=1,\ldots,d,\\
p_i(T)=0 , \\
u^*(t) =-\frac{1}{2} R^{-1}B(y^*(t))^{\top} p^*(t).
\end{array} \right.
\end{equation}

In this case we need to compute the value function using the control cost functional \eqref{Wu}, leading to the following formula

\begin{equation}
V(x)= \int_0^{\top} y^*(s)^{\top} Q y^*(s) + W(u^*(s)) \, ds ,
\label{cost_const}
\end{equation}
while the gradient is still obtained by the initial value of the adjoint $p(0)$.

\subsection{Functional Tensor Train}
\label{sec:ftt}
The value function defined by \eqref{VF} lives in the same dimension of the dynamical system \eqref{eq}. We are interested in dealing with high dimensional dynamical systems, $e.g.$ those deriving from the semidiscretization of PDEs or from complex systems. For this reason we need an efficient procedure to deal with high-dimensional functions. We are going to consider the Functional Tensor Train (FTT) to mitigate this problem. We sketch the main aspects, further details can be found in \cite{Marzouk-stt-2016,Gorodetsky-ctt-2019}.

First of all, let us fix for each variable $x_k$ a set of $n_k$ basis functions $\Phi_k(x_k):=\{\Phi^{(1)}_k(x_k),\dots , \Phi^{(n_k)}_k(x_k) \}$ and a set of collocation points $X_k=\{x_k^{(i)}\}_{i=1}^{n_k}$.
For uniqueness of the representation we assume that the Vandermonde matrix $\Phi_k(X_k) \in \mathbb{R}^{n_k\times n_k}$ is nonsingular.
Classical choices for the basis functions are the Lagrange basis or the Legendre polynomials.
Now, given a multivariate function $V:X := \varprod_{k=1}^d X_k  \rightarrow \mathbb{R}$, we are interested in the following approximation, which we call FTT:
\begin{equation}
V(x) \approx \tilde{V}(x) := \sum_{\alpha_0=1}^{r_0} \sum_{\alpha_1=1}^{r_1} \, \dotsi \,\sum_{\alpha_d=1}^{r_d} G^{(1)}_{(\alpha_0,\alpha_1)}(x_1) \, \dotsi \, G^{(k)}_{(\alpha_{k-1},\alpha_k)}(x_k) \,\dotsi\, G^{(d)}_{(\alpha_{d-1},\alpha_d)}(x_d).
\label{FTTd}
\end{equation}
The summation ranges $r_k$ are called \emph{TT ranks} and the factor $G^{(k)}_{(\alpha_{k-1},\alpha_k)}(x_k)$ is called $k$-th \emph{TT core}.
Without loss of generality we can fix $r_0=r_d=1$.
The TT core is a linear combination of the $n_k$ basis functions:
$$
G^{(k)}_{(\alpha_{k-1},\alpha_k)}(x_k)= \sum_{i=1}^{n_k} \Phi^{(i)}_k(x_k) H^{(k)}_{(\alpha_{k-1},i,\alpha_k)}=\Phi_k(x_k)\cdot H^{(k)}_{(\alpha_{k-1},\alpha_k)}\;,
$$
where $H^{(k)} \in \mathbb{R}^{r_{k-1}\times n_k \times r_k}$ is a three-dimensional tensor,
$H^{(k)}_{(\alpha_{k-1},i,\alpha_k)} \in \mathbb{R}$ is its element, and
$H^{(k)}_{(\alpha_{k-1},\alpha_k)} \in \mathbb{R}^{n_k}$ is a vector from $H^{(k)}$, sliced at the given indices $\alpha_{k-1},\alpha_k$.
Similarly, we introduce a matrix slice $H^{(k)}_{(i)} \in \mathbb{R}^{r_{k-1} \times r_k}$ and a matrix valued function $G^{(k)}(x_k): X_k \rightarrow \mathbb{R}^{r_{k-1} \times r_k}$.
This allows us to ease the notation, and write \eqref{FTTd} in a matrix form as follows
$$
\tilde{V}(x) =  G^{(1)}(x_1)\, \dotsi \,G^{(k)}(x_k)\, \dotsi \, G^{(d)}(x_d).
$$ 
In the following sections we are going to refer to the TT rank of a tensor as the maximum among all the TT ranks, $r=\max_{k=0,\ldots,d} r_k$.

The TT decomposition was initially written for discrete tensors~\cite{osel-tt-2011}.
A relation to function approximation is established via the same Cartesian basis,
$$
\tilde V(x) = \sum_{i_1,\ldots,i_d=1}^{n_1,\ldots,n_d} H_{(i_1,\ldots,i_d)} \Phi^{(i_1)}_1(x_1) \cdots \Phi^{(i_d)}_d(x_d).
$$
This corresponds to the TT decomposition
\begin{equation}\label{eq:ttd}
H_{(i_1,\ldots,i_d)} = \sum_{\alpha_0,\ldots,\alpha_d=1}^{r_0,\ldots,r_d} H^{(1)}_{(\alpha_0,i_1,\alpha_1)} \cdots H^{(d)}_{(\alpha_{d-1},i_d,\alpha_d)}.
\end{equation}
Counting the number of elements in the tensors in the right hand side, we notice that the TT decomposition needs $\sum_{k} r_{k-1} n_k r_k = \mathcal{O}(dnr^2)$ degrees of freedom (where we introduce $n:=\max_k n_k$), in contrast to $\mathcal{O}(n^d)$ elements in the full tensor of coefficients $H$.
Other tensor decompositions can be used, such as the HT format \cite{hk-ht-2009,hackbusch-2012} or the range-separated tensor format \cite{bkk-range-sep-2018}. However, in this paper we prefer to use the TT format as the most simple yet general representation, which is suitable for value functions as they are usually smooth.

The TT format admits fast linear algebraic operations.
For instance, we can compute multivariate integrals of $\tilde V(x)$ by using a tensor product of high-order univariate quadratures.
Let $\{x_k^{(i)}\}$ and $\{w_k^{(i)}\}$ be quadrature nodes and weights respectively.
Then the integral $\int V(x) dx$ can be approximated by
$$
\left(\cdots \left(\left[\sum_{i_1=1}^{n_1} w_1^{(i_1)} G^{(1)}(x_1^{(i_1)})  \right] \cdot \left[\sum_{i_2=1}^{n_2} w_2^{(i_2)} G^{(2)}(x_2^{(i_2)})\right] \right)\cdots \right) \cdot \left[\sum_{i_d=1}^{n_d} w_d^{(i_d)} G^{(d)}(x_d^{(i_d)}) \right],
$$
requiring $\mathcal{O}(dnr^2)$ operations in this order.
Similarly, we can compute derivatives, and approximate
\begin{equation}
\nabla_{x_k} V(x) \approx G^{(1)}(x_1)\cdots G^{(k-1)}(x_{k-1}) \cdot \left[\frac{d}{dx_k} G^{(k)}(x_k) \right] \cdot G^{(k+1)}(x_{k+1}) \cdots G^{(d)}(x_d),
\label{grad_ftt}
\end{equation}
again needing $\mathcal{O}(dnr^2)$ operations per point.
Additions, inner and pointwise products, as well as actions of linear operators can be written as explicit TT decompositions with a cost that is linear in $d$.
Such explicit decompositions are likely to have overestimated TT ranks.
However, if a tensor \eqref{eq:ttd} (or a function \eqref{FTTd}) is given in a TT format, a quasi-optimal re-approximation can be done in $\mathcal{O}(dnr^3)$ operations by using QR and SVD factorizations.
For details we refer to~\cite{osel-tt-2011}.

Explicit TT formats are a rare exception though.
In general, a function may have no exact decomposition, and an approximation must be sought from (as few as possible) evaluations of the function.
This can be achieved by solving a Least Squares problem, by minimizing the sum of squares of errors at given (e.g. random) points over the elements of TT cores \cite{oss-hjb0-2019,Eigel-VMC-2019,oss-hjbt-2021,foss-exittime-2020}.
However, a priori chosen point sets may miss an important region of the domain, which will make the approximation inaccurate.
Alternatively, \emph{TT-Cross} methods \cite{ot-ttcross-2010,so-dmrgi-2011proc,sav-qott-2014} adapt the sampling points iteratively towards the optimal locations for the current iterate.
However, existing methods employ only the values of the function itself, which may be suboptimal if the values of the gradient are also available for free.

\section{Gradient Cross and value function approximation}
\label{sec:gc}
As discussed in the previous sections, we want to recover the feedback map for the optimal control problem given the knowledge of the value function and its gradient in specific points.
In this section we develop a new algorithm in which this information about the value function will enrich the approximation via the FTT. More precisely, given certain sample points $\{x_i \}_{i=1}^N$ and a dataset $\{ \,V(x_i),\, \nabla V(x_i)\}_{i=1}^N$ computed by either Pontryagin or SDRE, we are interested in determining the coefficient tensors $\{H^{(1)},\ldots, H^{(d)}\}$ which characterize the FTT representation $\tilde{V}(x)$ introduced in \eqref{FTTd}. The regression problem can be formulated as
$$
\min_{H^{(1)}, \ldots, H^{(d)}} \sum_{i=1}^N | \tilde{V}(x_i) - V(x_i)|^2 + \lambda \Vert \nabla \tilde{V}(x_i) -\nabla V(x_i) \Vert^2,
$$
where $\lambda>0$ is a parameter which weights the contribution of the derivatives, and the gradient of the FTT can be computed considering univariate derivatives \eqref{grad_ftt}.
The resolution of the minimization problem will be addressed in the next sections, first presenting the bi-variate case, and then extending the algorithm to higher dimensions.

\subsection{Bidimensional Gradient Cross}

Given a bidimensional function $V(x_1,x_2)$, we will denote by $V_0(x_1,x_2)$ the function itself and by $V_1(x_1,x_2)$ and $V_2(x_1,x_2)$ its partial derivatives with respect to $x_1$ and $x_2$.
Let us fix two sets of collocation points $X_1 \in \mathbb{R}^{n_1}$ and $X_2 \in \mathbb{R}^{n_2}$ for each dimension. One may use the matrix $V_i(X_1,X_2)=[V_i(x_1^{(j)},x_2^{(k)})] \in \mathbb{R}^{n_1 \times n_2}$ for $i \in \{0,1,2\}$ to construct a discrete approximation of the function, but these evaluations are expensive, in particular in dimension larger than two.

For this reason we are interested in a TT representation of the form
\begin{equation}
V(x_1,x_2) \approx G^{(1)}(x_1) G^{(2)}(x_2)
\label{gradient_cross}
\end{equation}
with
$$
G^{(1)}(x_1) = \Phi_1(x_1) H^{(1)} , \qquad G^{(2)}(x_2) = H^{(2)}  \Phi^{\top}_2(x_2)
$$
where $H^{(1)} \in \mathbb{R}^{n_1 \times r}$, $H^{(2)} \in \mathbb{R}^{r \times n_2}$ and $\{\Phi_k(x)\}_{k=1,2}$ are prefixed basis functions.

More precisely, we are looking for two sets of indices $I_1$ and $I_2$ with cardinality $\#I_1=\#I_2=r$, and the corresponding interpolating approximations
$$
G^{(1)}(x_1) \hat G^{(2)}(x_2) \qquad \mbox{and} \qquad \hat G^{(1)}(x_1) G^{(2)}(x_2)
$$
such that $G^{(k)}(X_k(I_k))=\mathtt{I}_r$, where $\mathtt{I}_r$ is the identity matrix,
and $\hat G^{(1)}(x_1) = V(x_1, X_2(I_2))$, $\hat G^{(2)}(x_2) = V(X_1(I_1), x_2)$.
This kind of approximation can be obtained via an alternating direction procedure by solving a sequence of least squares problems.

Starting from an initial guess for $H^{(2)}$ and $I_2$, we want to solve the following problem in the $x_1$-direction
\begin{equation}
\min_{H^{(1)}} \sum_{i=0}^2\lambda_i \Vert V_i(X_1,X_2(I_2))- \tilde{V}^1_i \Vert^2,
\label{min_x}
\end{equation}
with
$$
\tilde{V}^1_i= \begin{cases} \Phi_1(X_1) H^{(1)} G^{(2)}(X_2(I_2)) & i=0, \\
\Phi'_1(X_1) H^{(1)} G^{(2)}(X_2(I_2)) & i=1, \\
\Phi_1(X_1) H^{(1)} \partial_{x_2} G^{(2)}(X_2(I_2))  & i=2.
\end{cases}
$$
In what follows we will fix $\lambda_0=1$ and $\lambda_1=\lambda_2=\lambda$ are the regularization parameters.

Let us consider for simplicity Lagrangian basis, i.e. $ \Phi_k(X_k)= \mathtt{I}_{n_k}  \in \mathbb{R}^{n_k \times n_k}$ for $k=1,2$. Moreover, we know by hypothesis that $G^{(2)}(X_2(I_2))=\mathtt{I}_r$.
Then, \eqref{min_x} is equivalent to the resolution of the following Lyapunov equation
\begin{equation}
\left(\mathtt{I}_{n_1} +\lambda \Phi'_1(X_1)^{\top}\Phi'_1(X_1)\right) H^{(1)}+\lambda H^{(1)} \tilde{G}_2 \tilde{G}_2^{\top} = V_0(X_1,X_2(I_2))+\lambda \Phi_1'(X_1)^{\top}  V_1(X_1,X_2(I_2))+ \lambda V_2(X_1,X_2(I_2)) \tilde{G}_2^{\top}  \,,
\label{lyap_x}
\end{equation}
where $\tilde{G}_{2}=\partial_{x_2} G^{(2)}(X_2(I_2)) = (\Phi'_2(X_2(I_2)) \Phi_2(X_2(I_2))^{\dagger})^\top$, where $\Phi_2(X_2(I_2))^{\dagger} = (H^{(2)})^\top$ is the Moore-Penrose pseudoinverse of $\Phi_2(X_2(I_2))$, arising from the condition $G^{(2)}(X_2(I_2))=\mathtt{I}_r$. 

Having solved equation \eqref{lyap_x}, we first execute the QR decomposition of the matrix $H^{(1)}=H R_1$ to improve the numerical stability and then we apply the maximum volume ($maxvol$) method \cite{gostz-maxvol-2010} to the matrix $H$. The $maxvol$ algorithm selects the most relevant indices $I_1$ such that the coefficient matrix $C:=H H[I_1,:]^{-1}$ satisfies $\max_{i,j}|C[i,j]| \le 1 + \delta$, where $\delta >0 $ is an arbitrary threshold. This procedure will provide an approximation of the maximum volume submatrix $H[I_*,:]$, $i.e.$ the submatrix with maximum determinant in modulus among all the possible $r \times r$ submatrices.
Taking $C$ as the  new $H^{(1)}$
ensures $G^{(1)}(X_1(I_1)) = \mathtt{I}_r$ for the next steps.

%Starting from an initial guess for $H^{(2)}_\alpha$ and $I_2$, we want to solve the following problem in the $x$-direction
%
%\begin{equation}
%\min_{S} \sum_{i=0}^2\lambda_i \Vert h_i(X_1,X_2(I_2))- \tilde{h}^2_i(X_1,X_2(I_2)) \Vert^2 ,
%\label{min_x}
%\end{equation}
%
%with
%$$
%\tilde{h}^2_i= \begin{cases} \Phi_1(X_1)H^{(1)} 
%\end{cases}
%$$
%$$
%\phi^i_1(x)= \begin{cases}  \partial_x \phi_1(x) & i=1,\\
%                \phi_1(x) & i=0,2,
%              \end{cases}  \quad 
%              \phi^i_2(y)= \begin{cases} \partial_y \phi_2(y) & i=2,\\
%                 \phi_2(y) & i=0,1.
%              \end{cases} 
%$$

Afterwards, we can pass to the least squares problem in the $y$-direction
\begin{equation}
\min_{H^{(2)}} \sum_{i=0}^2\lambda_i \Vert V_i(X_1(I_1),X_2)- \tilde{V}^2_i \Vert^2,
\label{min_y}
\end{equation}
with
$$
\tilde{V}^2_i= \begin{cases} G^{(1)}(X_1(I_1)) H^{(2)}  \Phi_2(X_2)^{\top}  & i=0, \\
\partial_{x_1} G^{(1)}(X_1(I_1)) H^{(2)}  \Phi_2(X_2)^{\top}  & i=1, \\
G^{(1)}(X_1(I_1)) H^{(2)}  \Phi'_2(X_2)^{\top} & i=2.
\end{cases}
$$
After the first step we have $G^{(1)}(X_1(I_1))=\mathtt{I}_r$ by construction. Then, \eqref{min_y} corresponds to the resolution of a Lyapunov equation
\begin{equation}
\lambda \tilde{G}_1^{\top} \tilde{G}_1 H^{(2)}+  
H^{(2)}\left(I_{n_2} +\lambda \Phi'_2(X_2)^{\top}\Phi'_2(X_2)\right) = V_0(X_1(I_1),X_2)+\lambda \tilde{G}_1^{\top}  V_1(X_1(I_1),X_2)+ \lambda V_2(X_1(I_1),X_2) \Phi'_2(X_2) \,,
\label{lyap_y}
\end{equation}
where $\tilde{G}_1=\partial_{x_1} G^{(1)}(X_1(I_1))$.
The remaining procedure is identical: we solve \eqref{lyap_y}, compute the QR decomposition of the solution and apply the $maxvol$ method, obtaining $I_2$ and $H^{(2)}$.
The strategy is repeated until either we converge according to residual criteria or we reach a maximum number of iterations.
The method is sketched in Algorithm \ref{alg_1}.

\begin{algorithm}[H]
%\captionsetup[algorithm]{name=TSA}
\caption{Bidimensional TT Gradient Cross with Lagrangian bases}
\begin{algorithmic}[1]
\State Choose an initial $I_2$ and $H^{(2)}$, a tolerance $tol$ and a maximum number of iteration $it_{max}$
\While{$res > tol$ and $it \le it_{max}$}
\State{Solve equation \eqref{lyap_x} obtaining $H^{(1)}$}
\State{Compute the QR decomposition $H^{(1)}=HR_1$}
\State{$[I_1,C]=maxvol(H)$}, replace $H^{(1)} = C.$\label{alg_1_maxvol_l}
\State{Solve equation \eqref{lyap_y} obtaining $H^{(2)}$}
\State{Compute the QR decomposition $(H^{(2)})^{\top}=H R_1$}\label{alg_1_qr_r}
\State{$[I_2,C]=maxvol(H)$}, replace $H^{(2)} = C^\top.$\label{alg_1_maxvol_r}
\State{Update $res$}\label{alg_1_res}
\State{$it=it+1$}
\EndWhile
\end{algorithmic}
\label{alg_1}
\end{algorithm}

For the initial guess $H^{(2)}$ one may consider a normally distributed pseudorandom matrix $H^{(2)} \in \mathbb{R}^{r \times n_2}$ and apply steps~\ref{alg_1_qr_r} and \ref{alg_1_maxvol_r} of Algorithm~\ref{alg_1} to obtain the initial set $I_2$.

With non-Lagrangian bases (i.e. if $\Phi_k(X_k) \neq \mathtt{I}_{n_k}$),
we simply need to amend lines~\ref{alg_1_maxvol_l} and \ref{alg_1_maxvol_r} of Algorithm~\ref{alg_1} to
$[I_1,\Phi_1(X_1) H^{(1)}]=maxvol(\Phi_1(X_1) H)$
and
$[I_2,\Phi_2(X_2) (H^{(2)})^{\top}]=maxvol(\Phi_2(X_2) H)$,
respectively.
This ensures that we select optimal grid points, not optimal coefficients.

The most reliable residual criterion is to compute the mean square approximation error on some validation set (e.g. random points), and to compare it to a chosen threshold.
However, a large validation set may inflate the computing time significantly, while a small set may underestimate the error.
In the course of existing alternating algorithms \cite{dkos-eigb-2014} it was found that it is sufficient to compare consecutive iterations.
Therefore, we proceed with the following definition in line~\ref{alg_1_res} of Algorithm~\ref{alg_1}:
$$
res = \max\left\{\frac{\|H^{(1)}_{it} - H^{(1)}_{it-1}\|_F}{\|H^{(1)}_{it}\|_F},~\frac{\|H^{(2)}_{it} - H^{(2)}_{it-1}\|_F}{\|H^{(2)}_{it}\|_F}\right\},
$$
where $it$ is the iteration number.

Once the value function is approximated by Algorithm~\ref{alg_1}, we can compute the optimal control and optimal trajectory starting from a given initial point $x^0 \in \mathbb{R}^2$.
The formula for the synthesis of the optimal control is given by
$$
u(x)=-\frac{1}{2}R^{-1} B(x)^{\top} \nabla V(x),
$$
where $x=(x_1,x_2) \in \mathbb{R}^2$. 
The computation of the gradient in this case is simply given by considering the derivatives in \eqref{gradient_cross}
$$
\partial_{x_1} V(x)=\Phi'_1(x_1) H^{(1)} G_2(x_2) ,
$$
$$
\partial_{x_2} V(x)=   G_1(x_1) H^{(2)} \Phi_2'^{\top}(x_2).
$$

\subsection{Multidimensional Gradient Cross}
\label{sec:multiTT}
Now we are going to generalize the result obtained in the previous section to an arbitrary dimension $d$. 
% First of all, let us fix for each variable $x_k$, a set of $n_k$ basis functions $\{\Phi^{(1)}_k(x_k),\dots , \Phi^{(n_k)}_k(x_k) \}$ and a set of collocation points $X_k=\{x_k^{(i)}\}_{i=1}^{n_k}$.
%Now, given a multivariate function $h:X = \varprod_{k=1}^d X_k  \rightarrow \mathbb{R}$, we are interested in the following approximation
%\begin{equation}
%h(x) \approx \tilde{h}(x) = \sum_{\alpha_0=1}^{r_0} \sum_{\alpha_1=1}^{r_1} \, \dotsi \,\sum_{\alpha_d=1}^{r_d} G^{(1)}_{(\alpha_0,\alpha_1)}(x_1) \, \dotsi \, G^{(k)}_{(\alpha_{k-1},\alpha_k)}(x_k) \,\dotsi\, G^{(d)}_{(\alpha_{d-1},\alpha_d)}(x_d).
%\label{FTTd}
%\end{equation}
%The term $G^{(k)}_{(\alpha_{k-1},\alpha_k)}(x_k)$ is called $k-th$ TT core and it is a linear combination of the prefixed $n_k$ basis:
%$$
%G^{(k)}_{(\alpha_{k-1},\alpha_k)}(x_k)= \sum_{i=1}^{n_k} \Phi^{(i)}_k(x_k) H^{(k)}_{(\alpha_{k-1},i,\alpha_k)}=\Phi_k(x_k)\cdot H^{(k)}_{(\alpha_{k-1},\alpha_k)}\;,
%$$
%where $ H^{(k)}_{(\alpha_{k-1},i,\alpha_k)} \in \mathbb{R}^{r_{k-1}\times n_k \times r_k}$ is a three-dimensional tensor.
%To ease the notation, we are going to write \eqref{FTTd} in matrix form as follows
%$$
%\tilde{h}(x) =  G^{(1)}(x_1)\, \dotsi \,G^{(k)}(x_k)\, \dotsi \, G^{(d)}(x_d).
%$$ 
The FTT representation in this case reads
$$
\tilde{V}(x) =  G^{(1)}(x_1)\, \dotsi \,G^{(k)}(x_k)\, \dotsi \, G^{(d)}(x_d).
$$ 
We will use the alternating strategy in this case too.
For this reason it is convenient to group all the terms before and after the $k$-th TT core, obtaining a more compact formula
$$
\tilde{V}(x) =  G^{(<k)}(x_{<k})\cdot G^{(k)}(x_k)\cdot  G^{(>k)}(x_{>k}),
$$
%$$
%\tilde{h}(x^{(\alpha)}_{<k},x_k^{(i)},x_{>k}^{(\beta)}) =  G^{(<k)}(x^{(\alpha)}_{<k})G^{(k)}(x_k)  G^{(>k)}(x^{\beta}_{>k}),
%$$
where
\begin{align}
G^{(<k)}(x_{<k})&=G^{(1)}(x_1)\, \dotsi \,G^{(k-1)}(x_{k-1}), & k &\ge 2, \\
G^{(>k)}(x_{>k})&=G^{(k+1)}(x_{k+1})\, \dotsi \, G^{(d)}(x_d), & k &\le d-1.
\end{align}

We are again interested in finding interpolation sets $\overline{X}_{<k} \subset X_1 \times \cdots \times X_{k-1}$ and $\overline{X}_{>k} \subset X_{k+1} \times \cdots \times X_{d}$ with $r_{k-1}$ and $r_k$ points, respectively.
Let us suppose that in the $k$-th step the sets $\overline{X}_{<k}$ and $\overline{X}_{>k}$ are given.
Combining the previous expressions, we can write
$$
\vv{V}^k:=\mathrm{vec}\left(V(\overline{X}_{<k},X_k,\overline{X}_{>k})\right)\approx \tilde{V}^k:= \left( G^{(<k)}\left(\overline{X}_{<k}\right) \otimes \Phi_k\left(X_k\right)  \otimes G^{(>k)}\left(\overline{X}_{>k}\right) \right) \cdot \mathrm{vec}(H^{(k)}),
$$
where $\mathrm{vec}(\cdot)$ stretches a tensor into a vector with the same order of elements,
and $\otimes$ is the Kronecker product of matrices.
Similarly to the bidimensional case, we are going to denote the function values by $\vv{V}^k_0$ and the derivative values with respect to the $i$-th component by $\vv{V}^k_i$.
Our aim is to solve the following least squares problem
\begin{equation}
\min_{H^{(k)}} \sum_{i=0}^{d} \lambda_i \Vert \vv{V}^k_i -\tilde{V}^k_i \Vert^2,
\label{LSd}
\end{equation}
where
$$
\tilde{V}^k_i= \begin{cases} \left( G^{(<k)}\left(\overline{X}_{<k}\right) \otimes \Phi_k\left(X_k\right)  \otimes G^{(>k)}\left(\overline{X}_{>k}\right) \right) \cdot \mathrm{vec}(H^{(k)})  & i=0 \\
\left( \partial_i  G^{(<k)}\left(\overline{X}_{<k}\right) \otimes \Phi_k\left(X_k\right)  \otimes G^{(>k)}\left(\overline{X}_{>k}\right)  \right) \cdot \mathrm{vec}(H^{(k)})  & i=1,\ldots,k-1 \\
\left( G^{(<k)}\left(\overline{X}_{<k}\right) \otimes \Phi'_k\left(X_k\right)  \otimes  G^{(>k)}\left(\overline{X}_{>k}\right)  \right) \cdot \mathrm{vec}(H^{(k)})  & i=k \\
 \left(  G^{(<k)}\left(\overline{X}_{<k}\right) \otimes \Phi_k\left(X_k\right)  \otimes \partial_i G^{(>k)}\left(\overline{X}_{>k}\right)  \right) \cdot \mathrm{vec}(H^{(k)}) & i=k+1,\ldots,d.
\end{cases}
$$
\begin{proposition}
The least square problem \eqref{LSd} can be solved as a three-dimensional Sylvester equation.
\end{proposition}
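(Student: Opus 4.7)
The plan is to form the normal equations of \eqref{LSd}, invoke the interpolation identities enforced by the preceding maxvol sweeps on the neighbouring cores, and read off the three-term Sylvester structure from the resulting Kronecker operator. Each model in the cost has the form $\tilde V^k_i = M_i\,\mathrm{vec}(H^{(k)})$, with $M_i$ a Kronecker product of three matrices, one per mode of the core. Setting the gradient of the quadratic cost to zero gives the normal equation
\begin{equation*}
\Bigl(\sum_{i=0}^d \lambda_i M_i^\top M_i\Bigr)\mathrm{vec}(H^{(k)}) \;=\; \sum_{i=0}^d \lambda_i M_i^\top \vv{V}^k_i,
\end{equation*}
and the identity $(A\otimes B\otimes C)^\top(A\otimes B\otimes C) = A^\top A \otimes B^\top B \otimes C^\top C$ turns each $M_i^\top M_i$ into a single Kronecker product of Gram matrices.

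Next I would invoke the maxvol-induced identities $G^{(<k)}(\overline{X}_{<k}) = \mathtt I_{r_{k-1}}$ and $G^{(>k)}(\overline{X}_{>k}) = \mathtt I_{r_k}$, together with the Lagrangian convention $\Phi_k(X_k) = \mathtt I_{n_k}$. Under these assumptions most Gram factors collapse to the identity and each $M_i^\top M_i$ is supported on a single mode: $I\otimes I\otimes I$ for $i=0$; $A_i^\top A_i \otimes I \otimes I$ for $1\le i<k$, with $A_i := \partial_i G^{(<k)}(\overline{X}_{<k})$; $I \otimes (\Phi'_k)^\top \Phi'_k \otimes I$ for $i=k$; and $I \otimes I \otimes B_i^\top B_i$ for $k<i\le d$, with $B_i := \partial_i G^{(>k)}(\overline{X}_{>k})$.

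Collecting coefficients, introduce
\begin{equation*}
P := \lambda_0 I + \sum_{i=1}^{k-1}\lambda_i A_i^\top A_i,\qquad Q := \lambda_k (\Phi'_k)^\top \Phi'_k,\qquad R := \sum_{i=k+1}^{d}\lambda_i B_i^\top B_i,
\end{equation*}
so that the coefficient operator reads $P\otimes I\otimes I + I\otimes Q\otimes I + I\otimes I\otimes R$. This is precisely the vectorised form of the three-dimensional Sylvester map $H^{(k)} \mapsto H^{(k)}\times_1 P + H^{(k)}\times_2 Q + H^{(k)}\times_3 R$, and the right-hand side is assembled analogously by contracting each $\vv{V}^k_i$ with the corresponding single-mode transpose.

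The only non-routine step will be bookkeeping: one must check that the maxvol identities really do kill every cross-term, so that no $M_i^\top M_i$ survives with two non-identity Kronecker factors simultaneously — which would take the system outside the Sylvester class — and to treat cleanly the boundary cases $k=1$ and $k=d$, where one of the sums over $i<k$ or $i>k$ is empty and the corresponding factor $P$ or $R$ degenerates to a purely scalar (or zero) contribution.
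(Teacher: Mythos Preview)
Your approach works under the extra hypotheses you impose, but it is both less general than the paper's and rests on a slight misdiagnosis of \emph{why} the Sylvester structure appears.

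The paper does \emph{not} invoke the interpolation identities $G^{(<k)}(\overline{X}_{<k})=\mathtt I$, $G^{(>k)}(\overline{X}_{>k})=\mathtt I$, nor a Lagrangian basis, in the multidimensional case. It simply observes that for every $i$ the Gram matrix $M_i^\top M_i$ shares \emph{two} of its three Kronecker factors with the common ``mass'' factors $M_<:=G^{(<k)\top}G^{(<k)}$, $M:=\Phi_k^\top\Phi_k$, $M_>:=G^{(>k)\top}G^{(>k)}$, because in $\tilde V^k_i$ only one of the three blocks is differentiated. Summing then yields the \emph{generalized} three-term operator
\[
A_< \otimes M \otimes M_> \;+\; M_< \otimes A \otimes M_> \;+\; M_< \otimes M \otimes A_>,
\]
which the paper solves by simultaneous generalized diagonalization of the three pencils $(M_<,A_<)$, $(M,A)$, $(M_>,A_>)$. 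No cross-term ever arises, with or without maxvol, so your closing worry is a red herring: the absence of two ``non-base'' factors in any $M_i^\top M_i$ is structural, not a consequence of the interpolation identities.

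What your assumptions buy is the collapse $M_<=M=M_>=\mathtt I$, turning the generalized Sylvester into a standard one. That is a legitimate simplification when it applies, but it costs generality: the paper explicitly allows non-Lagrangian bases (and uses Legendre polynomials in all high-dimensional tests), so $M=\Phi_k^\top\Phi_k\neq\mathtt I$ there; and on the very first sweep the right interface $G^{(>k)}(\overline{X}_{>k})$ need not be the identity either. The paper's route avoids these caveats entirely.
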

\begin{proof}
The proof is available in Appendix~\ref{appendixA1}.
\end{proof}
%This least squares problem can be solved as a three-dimensional Sylvester equation as shown in Appendix~\ref{appendixA1}.
Having obtained the solution $H^{(k)},$
we can
consider the unfolding matrix $H^{(k)}_L = \left[H^{(k)}_{(\overline{\alpha_{k-1}, i},\alpha_k)}\right] \in \mathbb{R}^{r_{k-1}n_k \times r_k}$
and compute its QR factorization $H^{(k)}_L = \tilde H^{(k)}_L R^{(k)}$,
then assemble an unfolded TT core
$$
\tilde G_L^{(k)} = \left(\mathtt{I}_{r_{k-1}} \otimes \Phi_k(X_k) \right) \tilde H_L^{(k)} \quad \Leftrightarrow \quad \tilde G^{(k)}(x_k)= \sum_{i=1}^{n_k} \Phi_k^{(i)}(x_k) \tilde H^{(k)}_{(i)}.
$$
%then reshape the orthogonal matrix $\tilde H^{(k)}_L$ back into a tensor $\tilde H^{(k)} \in \mathbb{R}^{r_{k-1} \times n_k \times r_k}$ and
%assemble the three-dimensional core
%$$
%\tilde G^{(k)}(x_k)= \sum_{i=1}^{n_k} \Phi_k^{(i)}(x_k) \tilde H^{(k)}_{(i)} \in \mathbb{R}^{r_{k-1} \times r_k},
%$$
%and the unfolding matrix $G^{(k)}_L= G^{(k)}(\overline{\alpha_{k-1} X_k},\alpha_k) \in \mathbb{R}^{r_{k-1}n_k \times r_k}$.
Now we can use the $maxvol$ method on $\tilde G^{(k)}_L$ to find the set $I_k$,
which is a subset of $[\alpha_{k-1}]_{\alpha_{k-1}=1}^{r_{k-1}} \times [i]_{i=1}^{n_k}$.
We can split $I_k$ into corresponding components
\begin{equation}\label{eq:Ik-split}
I_k^{\alpha} = \{\alpha_{k-1}: \overline{\alpha_{k-1},i} \in I_k\}, \quad \mbox{and} \quad I_k^{x} = \{i: \overline{\alpha_{k-1},i} \in I_k\}.
\end{equation}
In turn, those enumerate elements in $\overline{X}_{<k}$ and $X_k$.
Therefore, we can define the new interpolation set
\begin{equation}\label{eq:X_k+1}
\overline{X}_{<k+1} := \overline{X}_{<k}(I_k^{\alpha}) \times X_k(I_k^x).
\end{equation}
that allows us to continue the iteration.
Finally, the new TT core tensor is recovered from the interpolating unfolding matrix $H^{(k)}_L:=\tilde H^{(k)}_L \left(\tilde G^{(k)}_L[I_k,:]\right)^{-1}$.

Passing on to the $(k+1)$-th step,
we need to compute $G^{(<k+1)}\left(\overline{X}_{<k+1}\right)$.
Computing the corresponding evaluations of all $k$ cores constituting $G^{(<k+1)}$ will result in an $\mathcal{O}(d^2)$ complexity of the entire algorithm.
However, since we can assume that $G^{(<k)}\left(\overline{X}_{<k}\right)$ was available in the current step, we can obtain $G^{(<k+1)}\left(\overline{X}_{<k+1}\right)$ with a cost independent of $k$ (and $d$).
These computations are shown in Appendix~\ref{appendixA2}.
%Indeed, for each of the $r_k$ elements in $I_k^{\alpha}$ and $I_k^x$ we compute the matrix products
%\begin{align}\label{eq:G_k+1}
%G^{(<k+1)}\left(\overline{X}_{<k+1}(\alpha_k)\right) &= G^{(<k)}\left(\overline{X}_{<k}(I_k^{\alpha}(\alpha_k))\right) G^{(k)}\left(X_{k}(I_k^x(\alpha_k))\right), \\\nonumber
%\partial_i G^{(<k+1)}\left(\overline{X}_{<k+1}(\alpha_k)\right) & =  \partial_i G^{(<k)}\left(\overline{X}_{<k}(I_k^{\alpha}(\alpha_k))\right)  G^{(k)}\left(X_{k}(I_k^x(\alpha_k))\right), &  i& =1,\ldots,k-1, \\\nonumber
%\partial_k G^{(<k+1)}\left(\overline{X}_{<k+1}(\alpha_k)\right) & = G^{(<k)}\left(\overline{X}_{<k}(I_k^{\alpha}(\alpha_k))\right)  \partial_k  G^{(k)}\left(X_{k}(I_k^x(\alpha_k))\right), & \alpha_k & =1,\ldots,r_k.
%\end{align}

We proceed in the same fashion until either the discrepancy between the consecutive iterations is below the stopping tolerance, or a maximum number of iteration has been reached.

The method needs a little modification to adapt the TT ranks to a given error threshold.
Firstly, if the ranks are overestimated, we can reduce them by computing the singular value decomposition (SVD) instead of the QR decomposition of $H^{(k)}_L$,
and truncate the former to ensure that the sum of squares of the truncated singular values is below the desired threshold.
Secondly, if the ranks are underestimated, we can expand $H^{(k)}_L$ with some extra $\rho_k$ columns $Z^{(k)}_L$, thereby increasing the TT rank $r_k$ to $r_k+\rho_k$.
It was shown~\cite{ds-amen-2014} that $Z^{(k)}$ can be taken as TT cores of a TT approximation of the error $z(x) := V(x) - \tilde V(x)$.
This interplay of the reduction and expansion of the TT ranks will eventually stabilize near optimal ranks for the given error.
The entire procedure is summarized in Algorithm~\ref{alg:gradient_cross}.
The expansion cores $Z^{(k)}$ are obtained by running an independent instance of the same algorithm computing a TT approximation $\tilde z(x) \approx z(x)$ with fixed TT ranks $\rho_1=\ldots=\rho_{d-1}=\rho$ instead of lines \ref{alg_gc_svd} and \ref{alg_gc_enrich}.

\begin{algorithm}[htb]
\caption{Gradient TT Cross}
\begin{algorithmic}[1]
\State Choose initial TT cores $H^{(k)}$, point sets $\overline{X}_{>k}$, a tolerance $tol$ and a maximum number of iterations $it_{max}$.
\While{$res > tol$ and $it \le it_{max}$}
\For{$k=1,\ldots,d$}
\State{Find $H^{(k)}$ as the minimizer in \eqref{LSd} by solving \eqref{normal_eq}.}
\State{(Optionally) Reduce the TT rank $r_k$ via truncated SVD of $H^{(k)}_L$ to error threshold $tol$}\label{alg_gc_svd}
\State{(Optionally) Increase $r_k$ by expanding $H^{(k)}_L:=\left[H^{(k)}_L,~Z^{(k)}_L\right]$ with TT core of error $Z^{(k)}_L$}\label{alg_gc_enrich}
\State{Compute the QR decomposition $H^{(k)}_L=\tilde H^{(k)}_L R^{(k)}$}
\State{$\left[I_k,~(\mathtt{I}_{r_{k-1}} \otimes \Phi_k(X_k)) H^{(k)}_L\right]=maxvol\left((\mathtt{I}_{r_{k-1}} \otimes \Phi_k(X_k)) \tilde H^{(k)}_L\right)$}\label{alg_gc_maxvol}
\State{Compute the next point set $\overline{X}_{<k+1}$ as shown in Eq.~\eqref{eq:Ik-split},~\eqref{eq:X_k+1}}
\State{Compute the next sampled cores as shown in Eq.~\eqref{eq:G_k+1}}
\State{Update $res$}\label{alg_gc_res}
\EndFor
\State{$it=it+1$}
\EndWhile
\end{algorithmic}
\label{alg:gradient_cross}
\end{algorithm}

\subsection{Two Boxes approach}

Our scope is to solve the HJB equation \eqref{HJB} in a computational domain $\Omega$ which is usually a hypercube $[-a,a]^d$ containing all initial conditions of interest and their subsequent optimal trajectories. We discretize the domain separately along each dimension. For the high dimensional numerical tests we consider Gauss-Legendre nodes and the corresponding Legendre polynomials on the nodes. This choice allow us to obtain accurate solution in the vicinity of the boundary, but it may be less accurate closer to the origin, where the system stabilizes.
For this reason we introduce an additional step: the Two Boxes (TB) algorithm.
First of all, we solve the optimal control problem on the whole domain using the TT Gradient Cross, constructing an approximation of the value function $V$ on $\Omega$. Afterwards, we construct the optimal trajectory $\tilde y^0(t)$ and the optimal control $\tilde u^0(t)$ starting from the origin, $\tilde y^0(0) = 0$. The exact path and control are zero constant functions since the origin is an equilibrium of the dynamics, but the approximation may escape from the origin due to approximation errors. In this case we consider the maximum value reached by the dynamics until a final time $T$. This maximum will be denoted as $\tilde y^0_{\max}:= \max_t \max_i |\tilde y_i^0(t)|$, and we will set $a_{TB}=2 \tilde y^0_{\max}$.
Afterwards, we construct a new value function $V_{TB}$ on the sub-domain $[-a_{TB},a_{TB}]^d$. Since the new domain is closer to the origin and smaller than the entire domain, we expect a tensor with smaller TT ranks and evaluations needed in the gradient cross. We will use the information provided by both value functions, defining the optimal feedback map as
$$
u^*(x)= \left\{ \begin{array}{l}
F(\nabla V_{TB}(x)), \quad \Vert x \Vert_\infty \le  a_{TB} ,\\
F(\nabla V(x)),\quad otherwise ,
\end{array} \right.
$$
with $F(g(x))=-\frac{1}{2}R^{-1}B(x)^{\top}g(x)$.

If $\tilde y^0_{\max}$ is small enough, the value function $V_{TB}$ would be close to the solution of the Linear Quadratic Regulator (LQR) problem. In the LQR setting the value function reads $V_{LQR}=x^{\top} \Pi x$, where $\Pi$ is the solution of the Riccati equation
$$
A^{\top}(0) \Pi + \Pi A(0) - \Pi B(0) R^{-1} B(0)^{\top} \Pi+Q=0 \,.
$$
In this case we can construct the optimal control as
$$
u^*(x)= \left\{ \begin{array}{l}
-R^{-1}B(0)^{\top} \Pi x, \quad \Vert x \Vert_{\infty} \le  a_{TB} ,\\
-\frac{1}{2}R^{-1}B(x)^{\top} \nabla V(x),\quad otherwise .
\end{array} \right.
$$
We notice that the second choice provides a faster procedure, since it implies just one resolution of a Riccati equation.
%In the cases studied in the section of the numerical tests we will notice that the second choice obtains better results in terms of error and it is much faster, since it implies just one computation of a Riccati equation.

\section{Numerical Tests}
\label{sec:nt}In this section we assess the proposed methodology through different numerical tests. First, we investigate the effect of adding gradient information in the regression in a series of closed-form high-dimensional functions with noisy evaluations.
The second numerical test deals with a two dimensional optimal control problem in which the exact value function is known. We test the efficiency of the method under noise and the effect of introducing control constraints. In the third example we study the three-dimensional Lorenz system. We study the performance of the algorithm reducing the control energy penalty in the cost functional. The last test deals with the Cucker-Smale model, where first we compare the PMP and SDRE approaches for data generation. We study the effect of varying the parameter $\lambda$ and the selection of an optimal parameter. In the last part of the section, a comparison with a Neural Network approach is presented together with the application of the Two Boxes approach.
The numerical simulations reported in this paper are performed on a Dell XPS 13 with Intel Core i7, 2.8GHz and 16GB RAM. The codes are written in Matlab R2021a.

Let us introduce some notations useful for the next sections. We denote by $J_{T}$ the total discrete cost functional computed by applying directly SDRE to approximate the optimal control problem up to a fixed final time $T$. The total discrete cost functional computed via TT Gradient Cross up to $T$ will be denoted by $\tilde{J}_{T}$.
Similarly, $\tilde{y}^*(t)$ denotes the discrete trajectory at time $t$ controlled with TT, and $\tilde{u}^*(t)$ denotes the corresponding discrete optimal control at time $t$.

We define the errors in the computation of the cost function and optimal control as
$$
err_J:=|J_{T}-\tilde{J}_{T}|, \qquad
err_u:=\sqrt{\sum_{i=0}^{n_t-1} (t_{i+1}-t_i) |u(t_i)-\tilde{u}(t_i)|^2},
$$
respectively, 
where $n_t$ is the number of time steps $t_i$ produced by the RK4 ODE solver,
 and the maximum absolute optimal state value at the final time $T$ as
$$
\tilde{y}_{\max}(T):=\max_i |\tilde{y}_i^*(T)|.
$$

\subsection{High-dimensional function approximation}

In this first numerical experiment we test the gradient cross algorithm for the approximation of high-dimensional functions. We will study the behaviour of the algorithm in presence of different noise levels.
We are going to consider the following two functions in dimension $d$:
\begin{itemize}
\item[a)] $f(x)=\exp(-\sum_{i=1}^d x_i/(2d) )$,  $x\in [-1,1]^d$,
\item[b)] $f(x)=\exp(-\prod_{i=1}^d x_i )$, $x\in [-1,1]^d$.
\end{itemize}

We fix the dimension $d=100$, the stopping error threshold for the gradient cross $tol=10^{-4}$ and we discretize the interval $[-1,1]$ with 33 Legendre-Gauss nodes for each direction.
The noise is introduced by adding independent identically distributed normal random numbers with mean $0$ and standard deviation $\sigma$ to the values of the function $f(x)$ and all components of the gradient $\partial_i f(x)$.
The error is computed with respect to the adaptive TT-Cross \cite{sav-qott-2014} with tolerance $10^{-12}$ and in absence of noise.

It is easy to see that the function $(a)$ has an exact rank-1 TT decomposition, so we will run the TT-Cross with a fixed rank 1.
In Figure~\ref{hig_dim} we show a comparison in terms of the mean approximation error for different $\lambda$. The noise magnitude $\sigma$ varies in the set $\{0\} \cup \{10^{-k}, k=1,\ldots,6\}$. In absence of noise, the gradient cross with $\lambda=0$ performs with high precision. Increasing the noise, the higher is $\lambda$, the better is the approximation.

Function $(b)$ is a function of rank higher than 1, and it is already possible to notice a different behaviour. For every noise amplitude it is possible to find a $\lambda \neq 0$ which obtains a better result compared to the cross approximation without gradient knowledge.
In particular, the gradient cross gives a meaningful approximation with an error of $0.1$ even with the largest noise magnitude $\sigma=0.1$, in which case the error of the standard TT-Cross (corresponding to $\lambda=0$) is larger than $1$.

	\begin{figure}[htbp]	
\centering
	\includegraphics[scale=0.5]{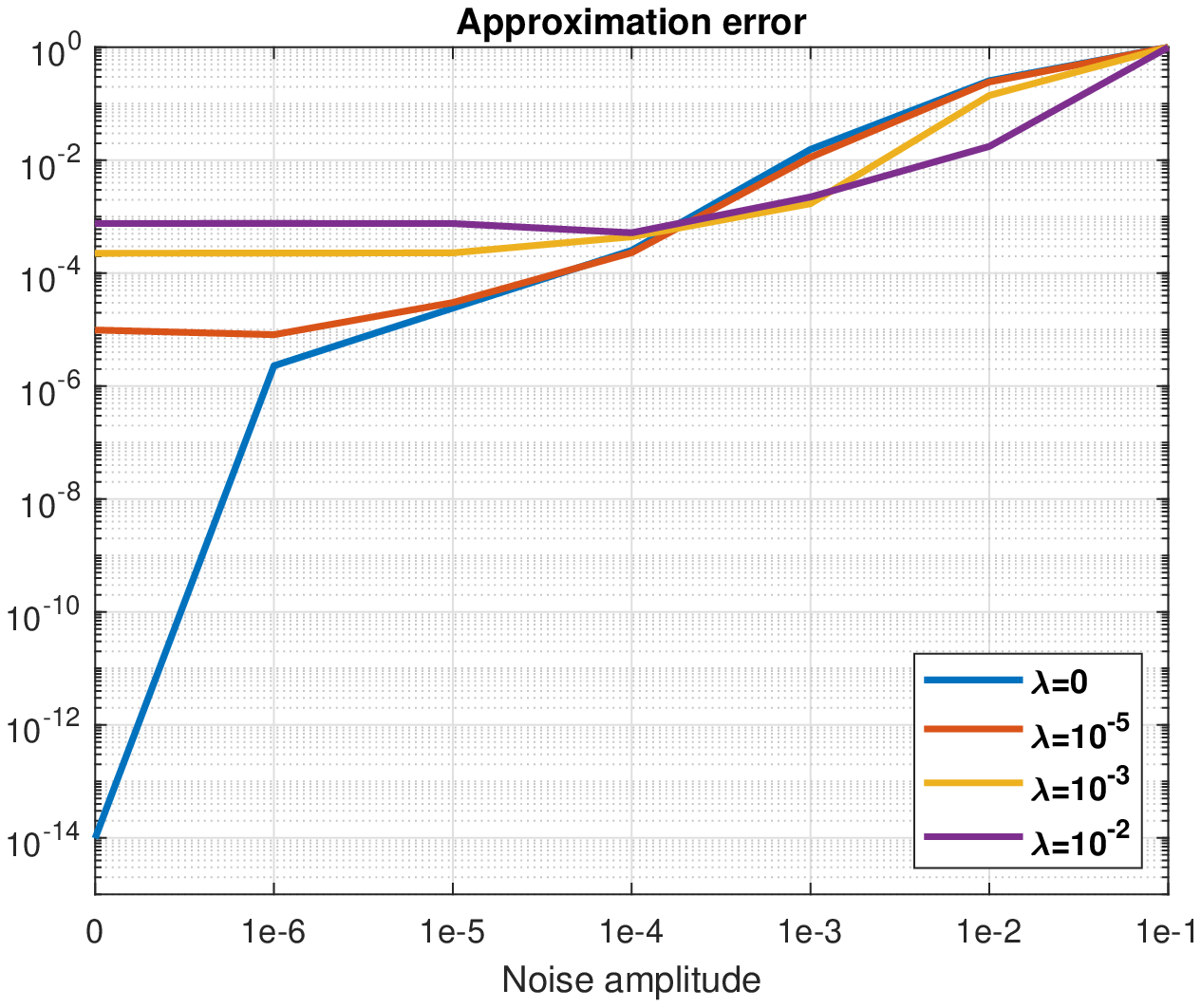}	
	\includegraphics[scale=0.5]{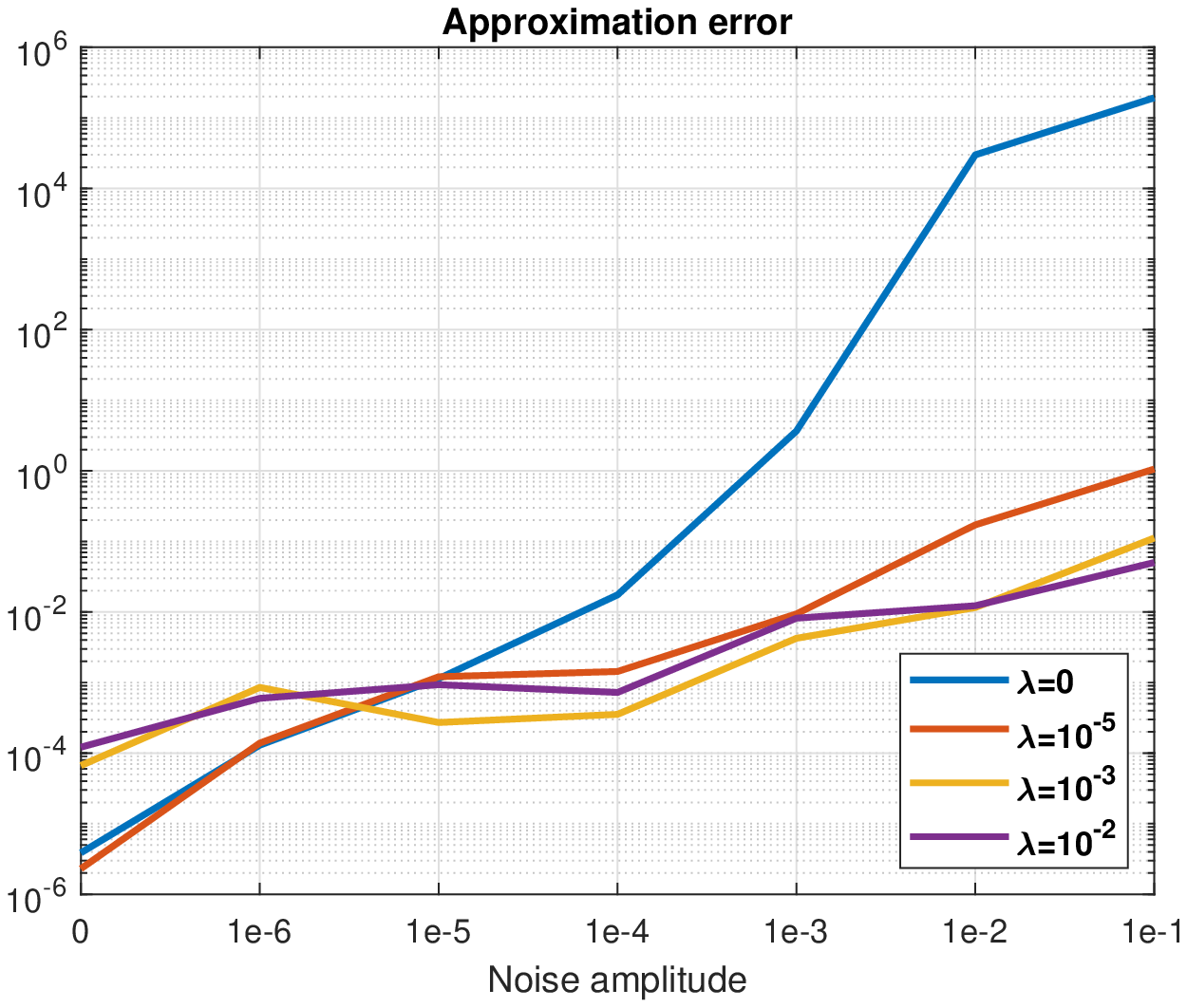}	
\caption{Mean approximation error for function $(a)$ (left) and function $(b)$ (right) for different $\lambda$ and noise amplitudes $\sigma$. The gradient cross (with $\lambda>0$) is much more accurate than a gradient-free method ($\lambda=0$) if a noisy approximate TT approximation is sought.}
\label{hig_dim}
\end{figure}

\subsection{2D Dynamics with Exact Solution}

The second numerical test deals with an example with exact solution.
Given the following state dynamics
\begin{equation}
\begin{bmatrix} \dot{x}_1 \\ \dot{x}_2  \end{bmatrix} = \begin{bmatrix} 0 & 1 \\ x_1^2 & 0  \end{bmatrix} \begin{bmatrix} x_1 \\ x_2  \end{bmatrix} + \begin{bmatrix} 0 \\ 1  \end{bmatrix} u
\label{dyn2D}
\end{equation}
and the associated cost functional
\begin{equation}
J= \frac{1}{2}\int_0^\infty \left( \Vert x(s) \Vert^2 + |u(s)|^2 \right) \, ds,
\label{J2D}
\end{equation}
the solution of the corresponding SDRE is
\begin{equation}
\Pi(x)= \begin{bmatrix} \frac{\sqrt{x_1^4 + 1}\sqrt{2\sqrt{x_1^4 + 1} + 2x_1^2 + 1}}{2} & \frac{\sqrt{ x_1^4 + 1} + x_1^2}{2} \\ \frac{\sqrt{ x_1^4 + 1} + x_1^2}{2} & \frac{\sqrt{2\sqrt{x_1^4 + 1} + 2x_1^2 + 1}}{2} \end{bmatrix}.
\label{SDREsol}
\end{equation}
We apply the TT gradient cross described in the previous section and we test it under the effect of noises of different amplitude $\sigma$.
We discretize the interval $[-1,1]$ using 14 Lagrangian basis functions and we fix the stopping error threshold for the gradient cross $tol=10^{-4}$.
The collection of the data for the value function and its gradient is performed via the resolution of SDREs. We will consider the case without the knowledge of the gradient ($i.e.$ $\lambda=0$) and the case with $\lambda=10^{-4}$.  The mean errors in Table \ref{Table_err} are computed on a sample of 100 random initial conditions. Since the control is computed taking into account the gradient of the value function, the sum of the considered errors provide an $H^1$ error estimate. It is possible to notice that in all cases the introduction of the gradient information yields a better approximation.
In Figure \ref{Fig_test2D} we show the optimal trajectories and the optimal control computed starting from the initial condition $x_0=(1,-1)$. The right panel of the figure shows the visual coincidence of the three solutions without the presence of noise, which was intuitable by the first row of Table \ref{Table_err}. The left panel shows the comparison of the solutions under a noise amplitude $\sigma=10^{-2}$. The choice $\lambda=0$ and $\lambda=10^{-4}$ cannot retrieve the starting behaviour of the exact control signal, which is zero at the initial time, while fixing $\lambda=1$ we can observe a better match.
%\begin{table}[htbp]
%						
%			
%		\centering
%		\begin{tabular}{c|cc|cc}
%					\toprule
%					& \multicolumn{2}{|c|}{\bf $err_J$}   &  \multicolumn{2}{|c}{\bf $err_u$} \\ $\sigma$ & $\lambda=0$ & $\lambda=10^{-4}$ &$\lambda=0$ &$\lambda=10^{-4}$  \\
%					\midrule
%				0 &	5.0e-8  &  4.6e-8 &   7.0e-7 &    5.3e-7  \\
%					$10^{-4}$ &	9.5e-6  &  5.2e-6 &    8.7e-4 &    7.3e-4  \\
%					$10^{-3}$ &	 1.0e-4 &    6.8e-5 &    1.1e-2 &   8.8e-3  \\
%					$10^{-2}$ &	 1.2e-3 &    8.2e-4 &     1.7e-2 & 1.2e-2  \\
%					$10^{-1}$ &	 2.6e-2 &   1.0e-2 &     0.16 & 8.8e-2  \\
%
%
%			\bottomrule		
%		\end{tabular}	
%		
%							\caption{Errors in the cost functional of the 2D model and in the control for different amplitudes of noise. \label{Table_err}}
%	\end{table}

\begin{table}[htbp]

		\centering
		\begin{tabular}{c|ccc|ccc}
					\toprule
					& \multicolumn{3}{|c|}{\bf $err_J$}   &  \multicolumn{3}{|c}{\bf $err_u$} \\ $\sigma$ & $\lambda=0$ & $\lambda=10^{-4}$& $\lambda=1$ &$\lambda=0$ &$\lambda=10^{-4}$ & $\lambda=1$  \\
					\midrule
				0 &	 8.6-9 &  9.8e-9 & 2.6e-8 & 5.0e-7 & 4.4e-7 & 1.1e-6  \\
					$10^{-4}$ &	9.1e-6 & 6.2e-6 & 3.4e-6 & 3.5e-4& 1.1e-4 & 6.4e-5 \\
					$10^{-3}$ &	6.1e-5 &  6.2e-5 & 3.5e-5 & 2.6e-3 & 1.7e-3 & 8.7e-4 \\
					$10^{-2}$ &	6.8e-4 & 6.8e-4 & 3.2e-4 & 1.2e-2 &  1.1e-2 & 5.5e-3  \\
					$10^{-1}$ &	 6.6e-2 &   2.0e-2   & 6.2e-3  &  0.16 & 0.11 &   6.0e-2 \\

			\bottomrule		
		\end{tabular}	
		
							\caption{Mean errors in the cost functional of the 2D model and in the control for different amplitudes of noise. \label{Table_err}}
	\end{table}
	
%	\begin{figure}[htbp]	
%\centering
%	\includegraphics[scale=0.5]{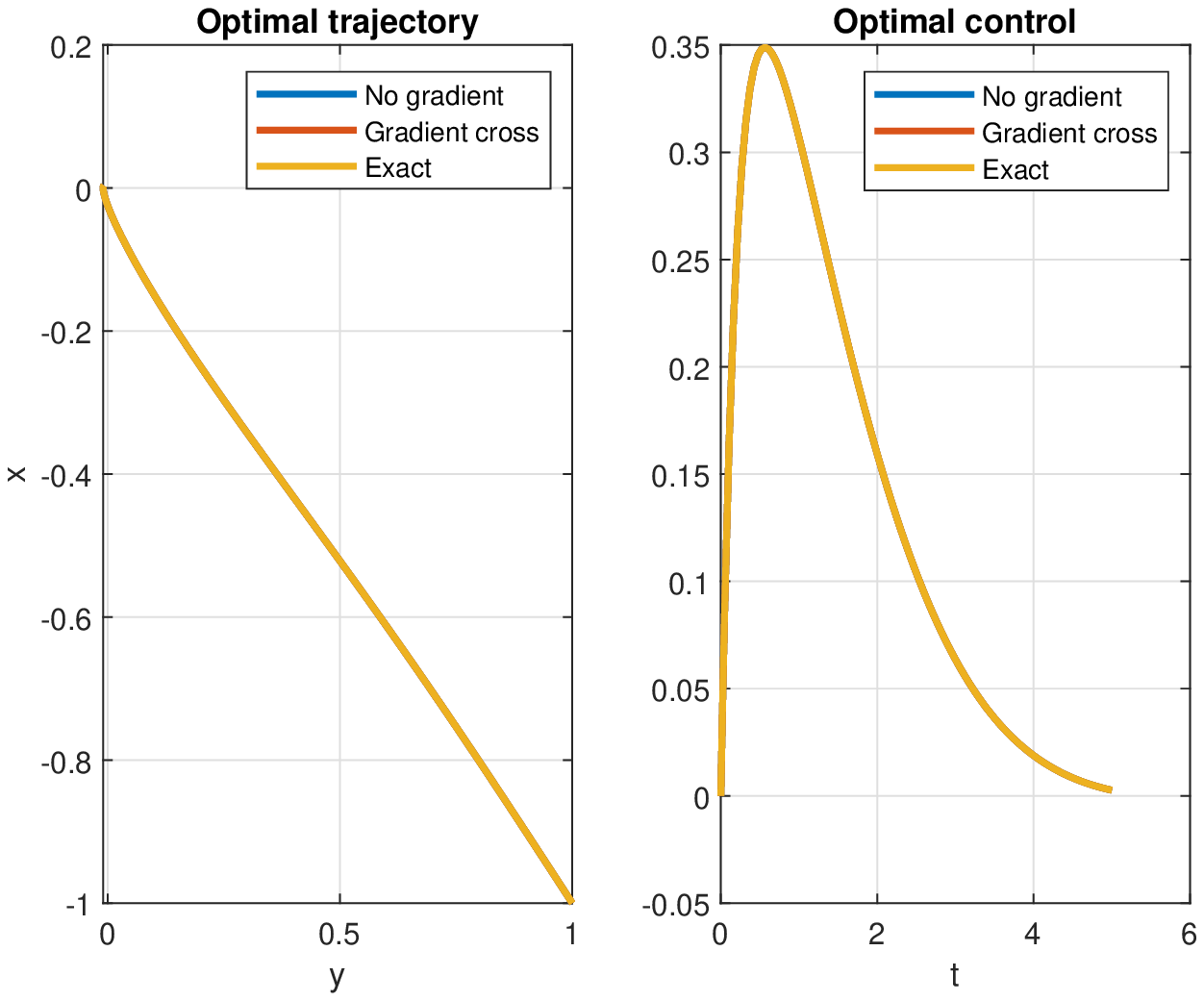}	
%	\includegraphics[scale=0.5]{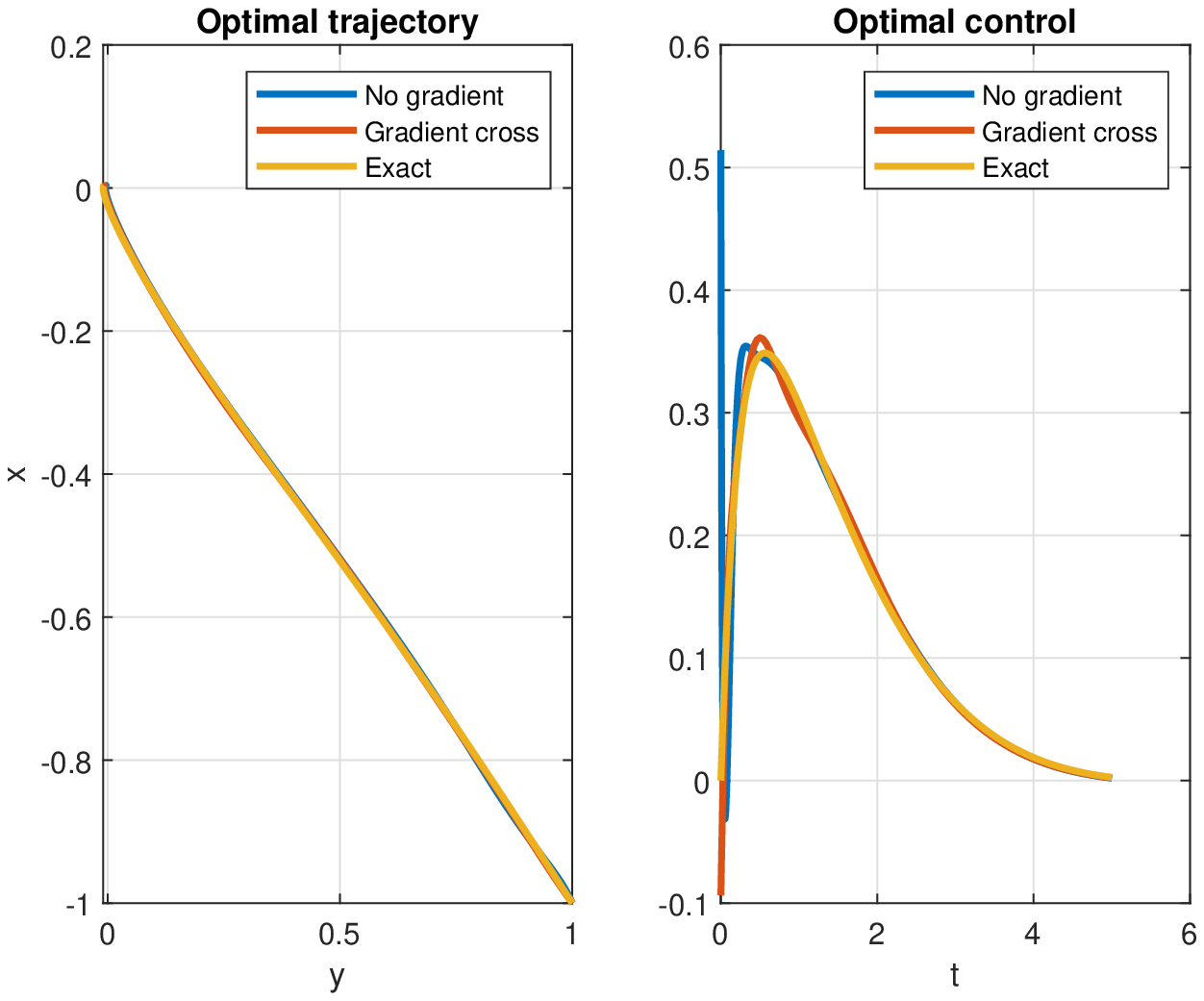}	
%\caption{Optimal trajectory and control without noise (left) and with noise amplitude $\sigma=10^{-2}$ (right) starting from $x_0=(1,-1)$.}
%\label{Fig_test2D}
%\end{figure}

	\begin{figure}[htbp]	
\centering
	\includegraphics[scale=0.5]{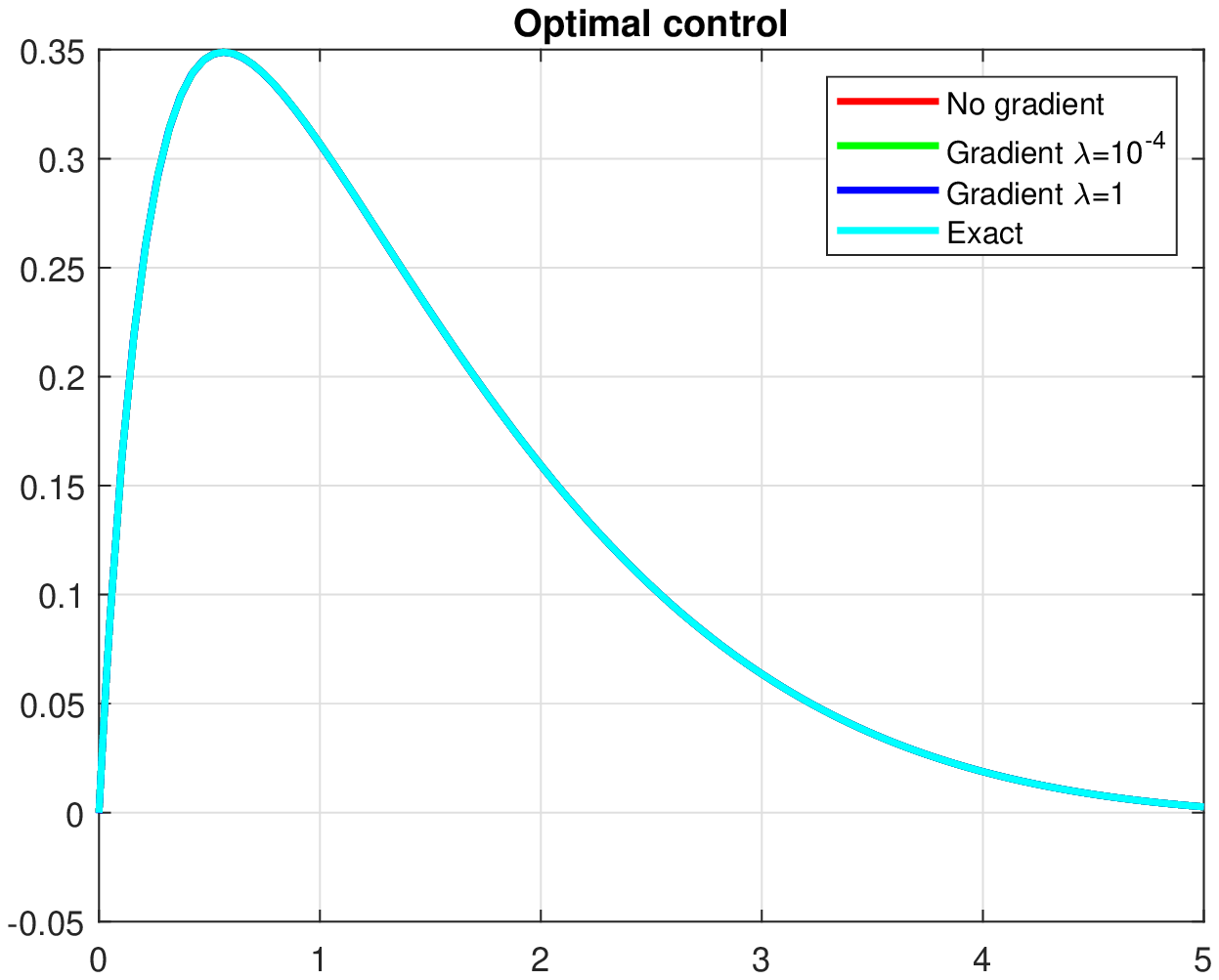}	
	\includegraphics[scale=0.5]{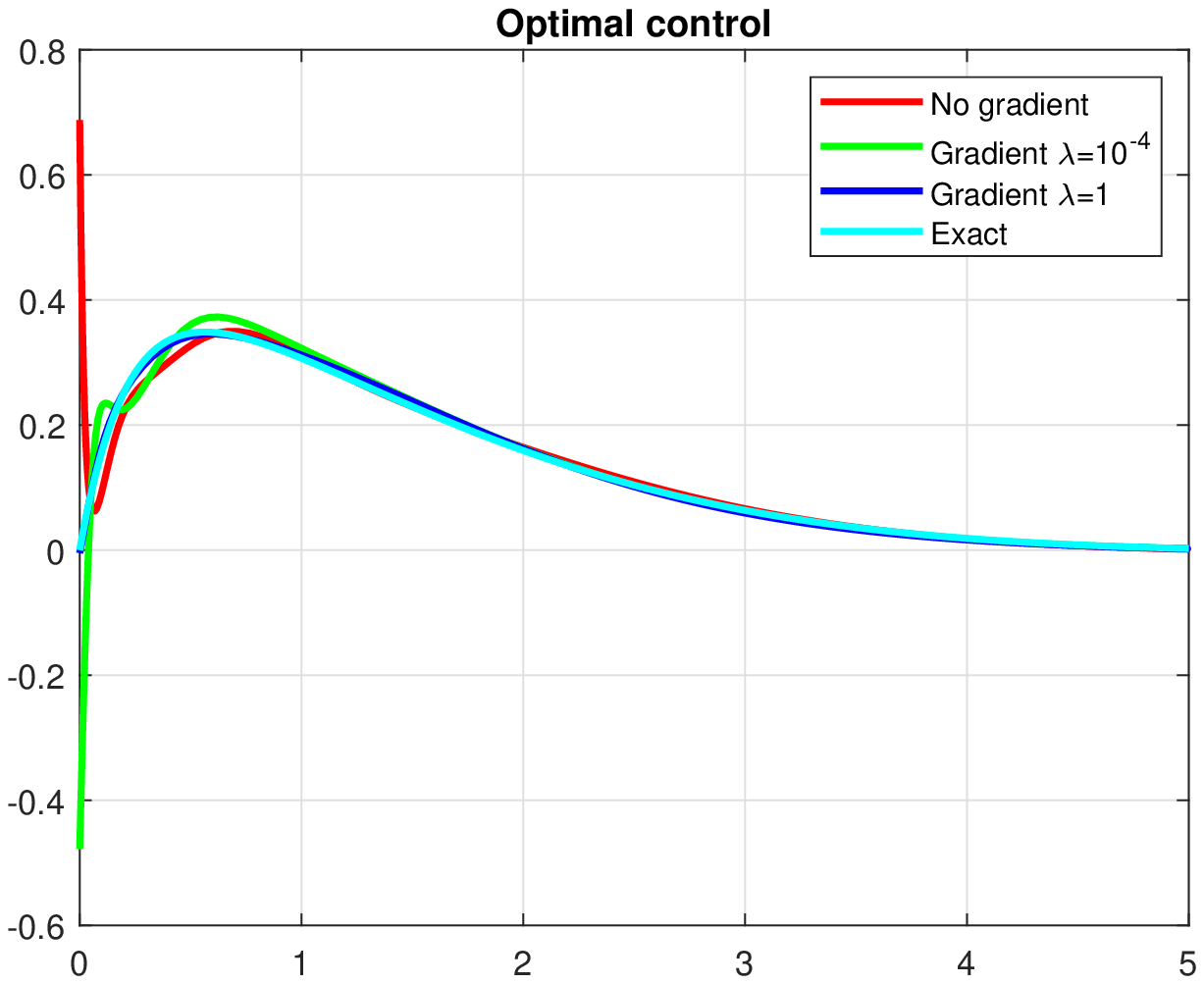}	
\caption{Optimal control without noise (left) and with noise amplitude $\sigma=10^{-2}$ (right) starting from $x_0=(1,-1)$.}
\label{Fig_test2D}
\end{figure}

\subsubsection*{Constrained control}

We focus on the optimal control problem \eqref{dyn2D}-\eqref{J2D} coupled with control constraints, $i.e.$ $|u|\le u_{max}$. As remarked in Section \ref{constraint}, in this case the value function and its gradient will be computed via the PMP, composing the optimal control as
$$
u^*=u_{max} \tanh\left(\frac{u}{u_{max}}\right)
$$
to enforce the control constraint and the optimisation of the cost functional \eqref{cost_const}.
We solve the problem in the domain $[-2,2]^2$ and we consider as initial condition $(x_1(0),x_2(0))=(2,2)$.
In the first test, we fix the TT-rank $r=5$.
In Table \ref{Table_constraints} we report the values of the total cost obtained using different $\lambda$ and different constraints. Both in the unconstrained case ($u_{max}=\infty$) and in the constrained cases it is possible to pick a $\lambda\neq 0$ which performs better than the case with $\lambda=0$.
It is not necessarily the largest $\lambda$, as an exceedingly large $\lambda$ deteriorates the conditioning of the normal equation \eqref{normal_eq}.
In the left panel of Figure \ref{traj_constraint} we show the optimal trajectories for $\lambda=0$ in the unconstrained case. In this case the optimal control manages to steer the dynamical system to the equilibrium.
However, if we constrain the control to $u_{max}=20$ (right panel of Figure \ref{traj_constraint}), we notice that the control is unable to stabilize the system. %  for all values of $\lambda$.

\begin{table}[htbp]							
		\centering
		\begin{tabular}{c c c c}
					\toprule
					& $u_{max}=\infty$ &$u_{max}=25$ &$u_{max}=20$
					     \\
					\midrule
					$\lambda=0$ &       70.2131           & 81.9607 &  93.5168 \\
					$\lambda=10^{-4}$ & \textbf{70.2124}  & 81.4676 & 92.0130 \\
					$\lambda=10^{-3}$ & 70.2139           & 81.1542 & \textbf{91.9481} \\
					$\lambda=10^{-2}$ & 70.2134           & \textbf{81.1451} &  92.0824  \\
			\bottomrule		
		\end{tabular}	

							\caption{Cost functional $\tilde J_T$ for different $\lambda$ and $u_{max}$ with $r=5$. In each column there is a nonzero $\lambda$ that gives the smallest cost. \label{Table_constraints}}
						
	\end{table}
	
	\begin{figure}[htbp]	
\centering
	\includegraphics[scale=0.5]{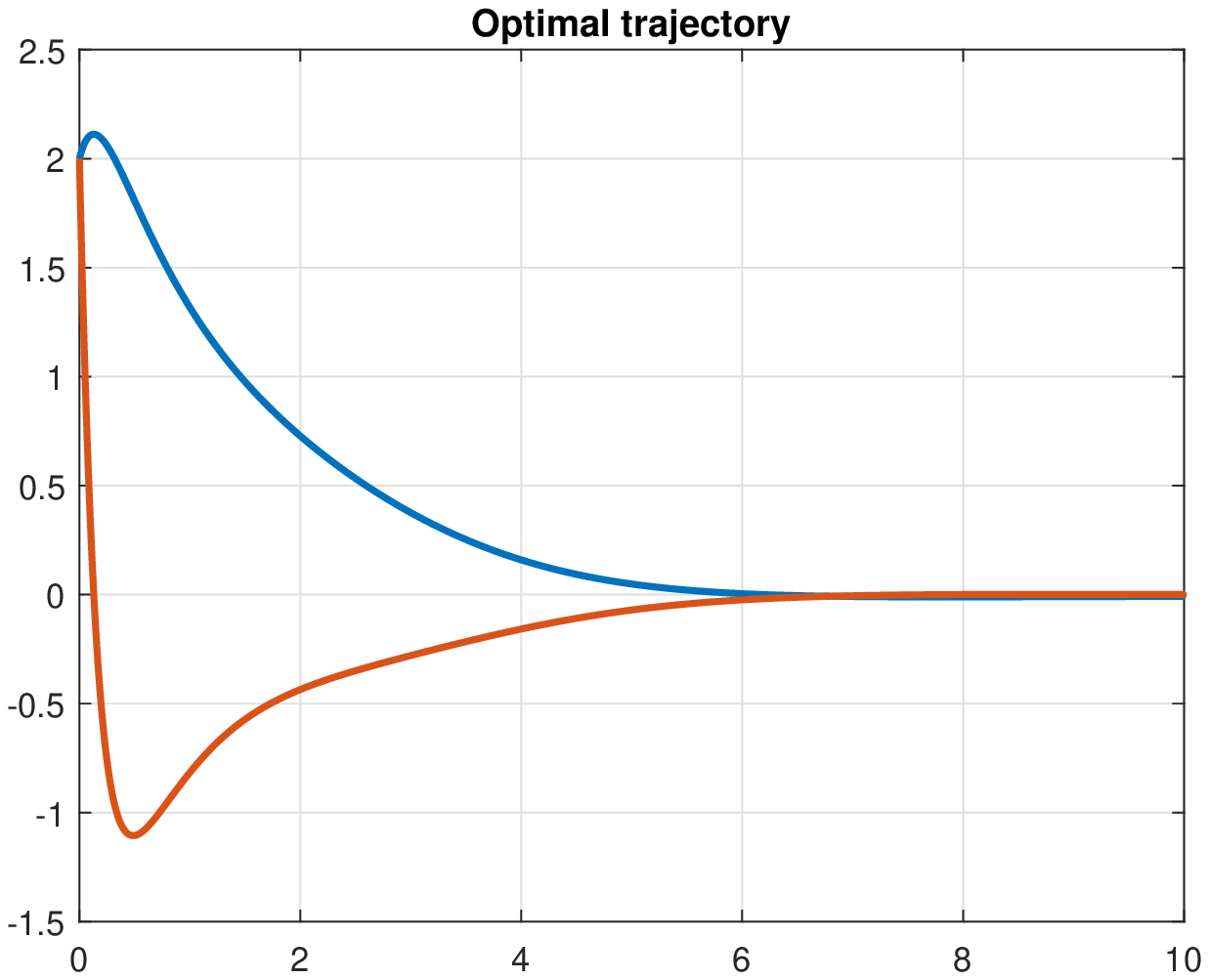}	
	\includegraphics[scale=0.5]{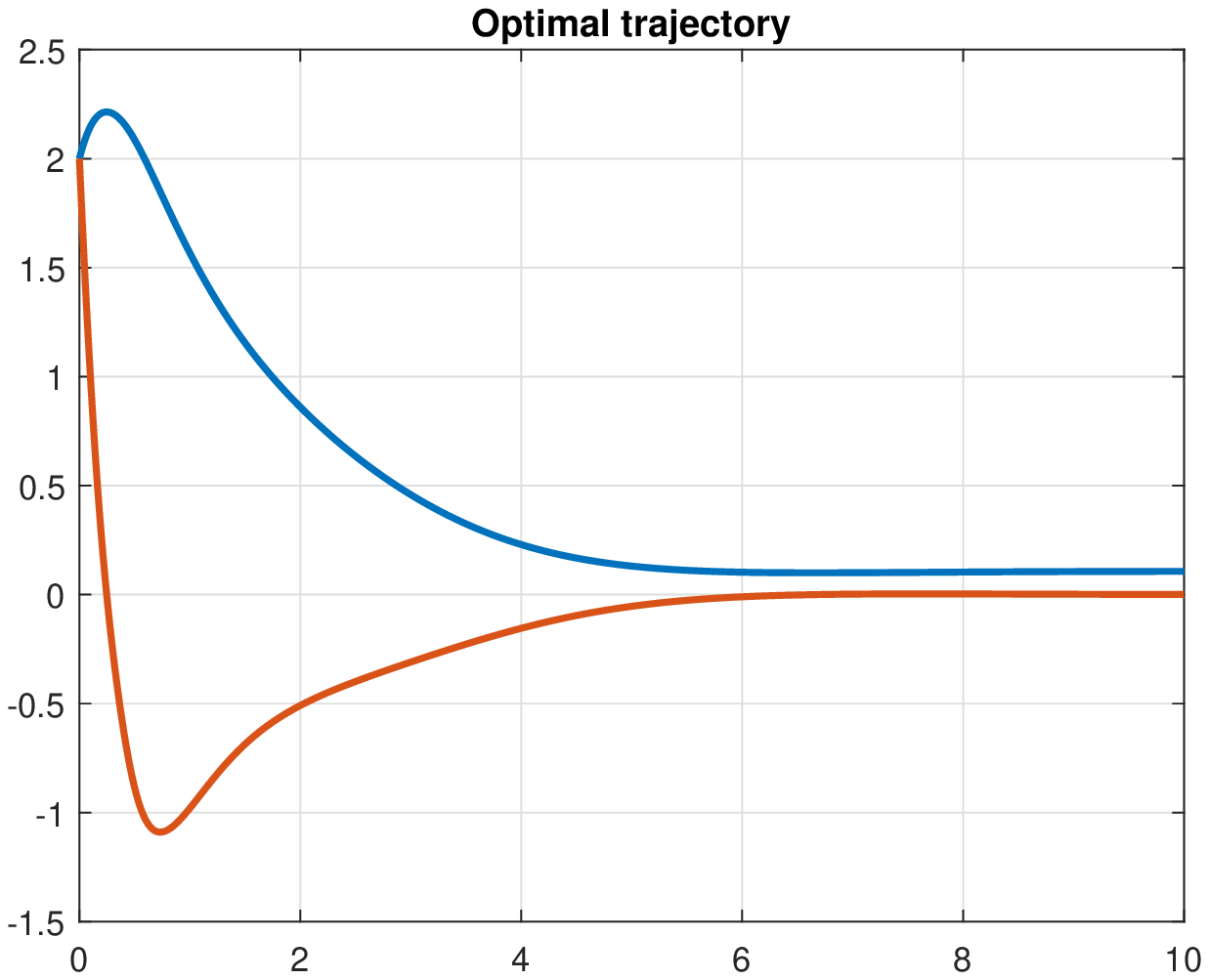}	
\caption{Optimal trajectory (blue: $x_1(t)$, red: $x_2(t)$) for $u_{max}=\infty$ (left) and $u_{max}=20$ (right) with $\lambda=0$ and TT rank $r=5$. This TT rank is too small to approximate the value function accurate enough to stabilize the trajectory.}
\label{traj_constraint}
\end{figure}

	\begin{table}[htbp]
		\centering
		\begin{tabular}{c c c c}
					\toprule
					& $u_{max}=\infty$ &$u_{max}=25$ &$u_{max}=20$
					     \\
					\midrule
					$\lambda=0$       & \textbf{70.2123} &  81.9629 &   93.2145 \\
					$\lambda=10^{-4}$ & 70.2124          & 81.4544 &  91.9991\\
					$\lambda=10^{-3}$ & 70.2130          &  81.1660 & \textbf{91.8594} \\
					$\lambda=10^{-2}$ & 70.2131          & \textbf{81.1548} & 92.0458  \\
			\bottomrule
		\end{tabular}

							\caption{Cost functional $\tilde J_T$ of the 2D model for different $\lambda$ and $u_{max}$ with $r=6$. \label{Table_constraints2}}

	\end{table}

This is fixed by increasing the TT-rank of our approximation to $r=6$.
We can see by the right panel of Figure \ref{traj_constraint2} that now the solution reaches the origin. In the left panel of Figure  \ref{traj_constraint2} we show the different behaviours of the control according to the different constraints, fixing $\lambda=10^{-3}$. Finally, we show in Table \ref{Table_constraints2} the total cost for the different choices of $\lambda$ and $u_{max}$ with $r=6$. Reducing the size of the constraint box, the difference between the no-gradient regression and choosing the best $\lambda$ increases, confirming that the gradient cross achieves a better result for the constrained case in presence of information of both the value function and its gradient.

	\begin{figure}[htbp]	
\centering
	\includegraphics[scale=0.5]{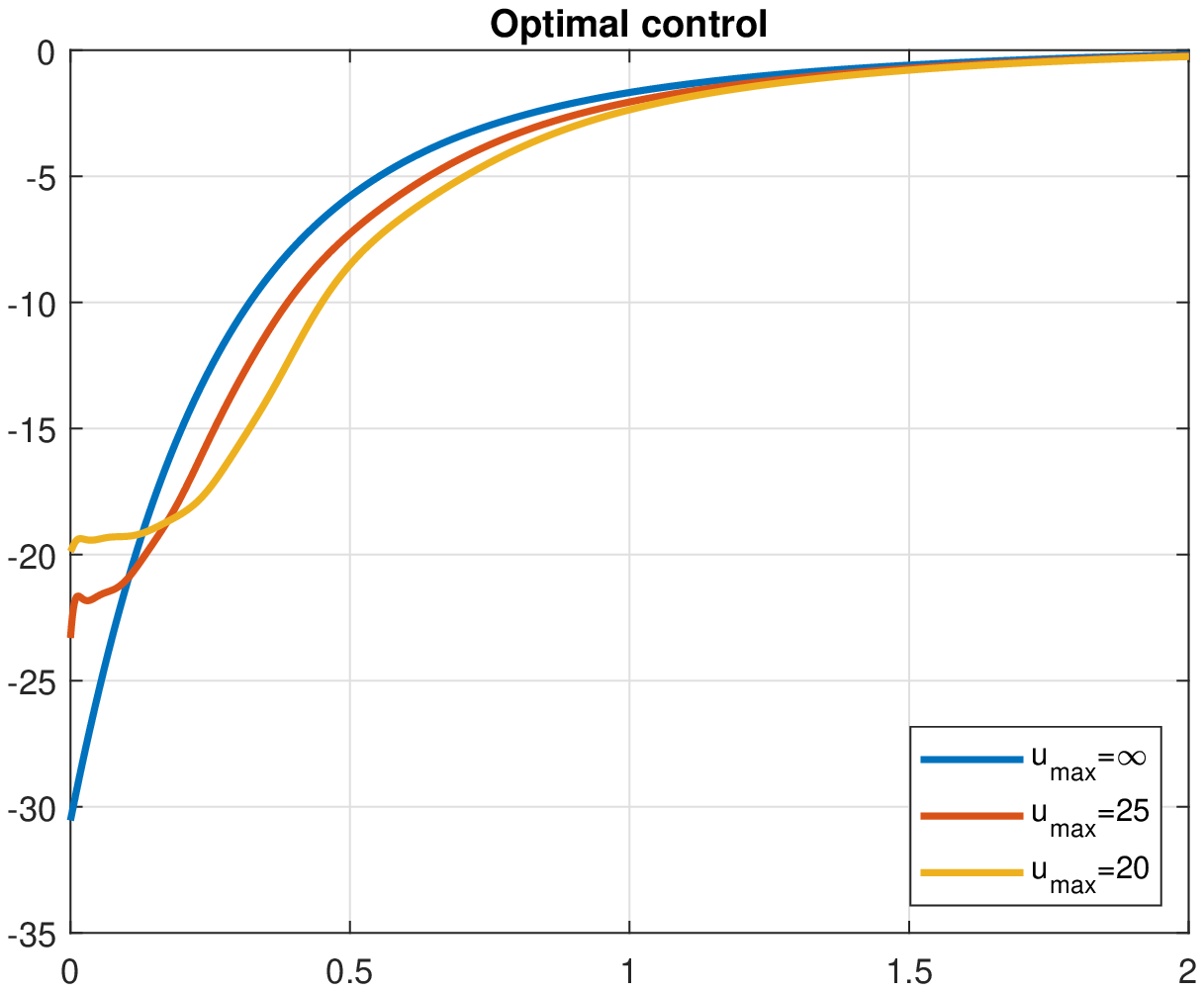}	
	\includegraphics[scale=0.5]{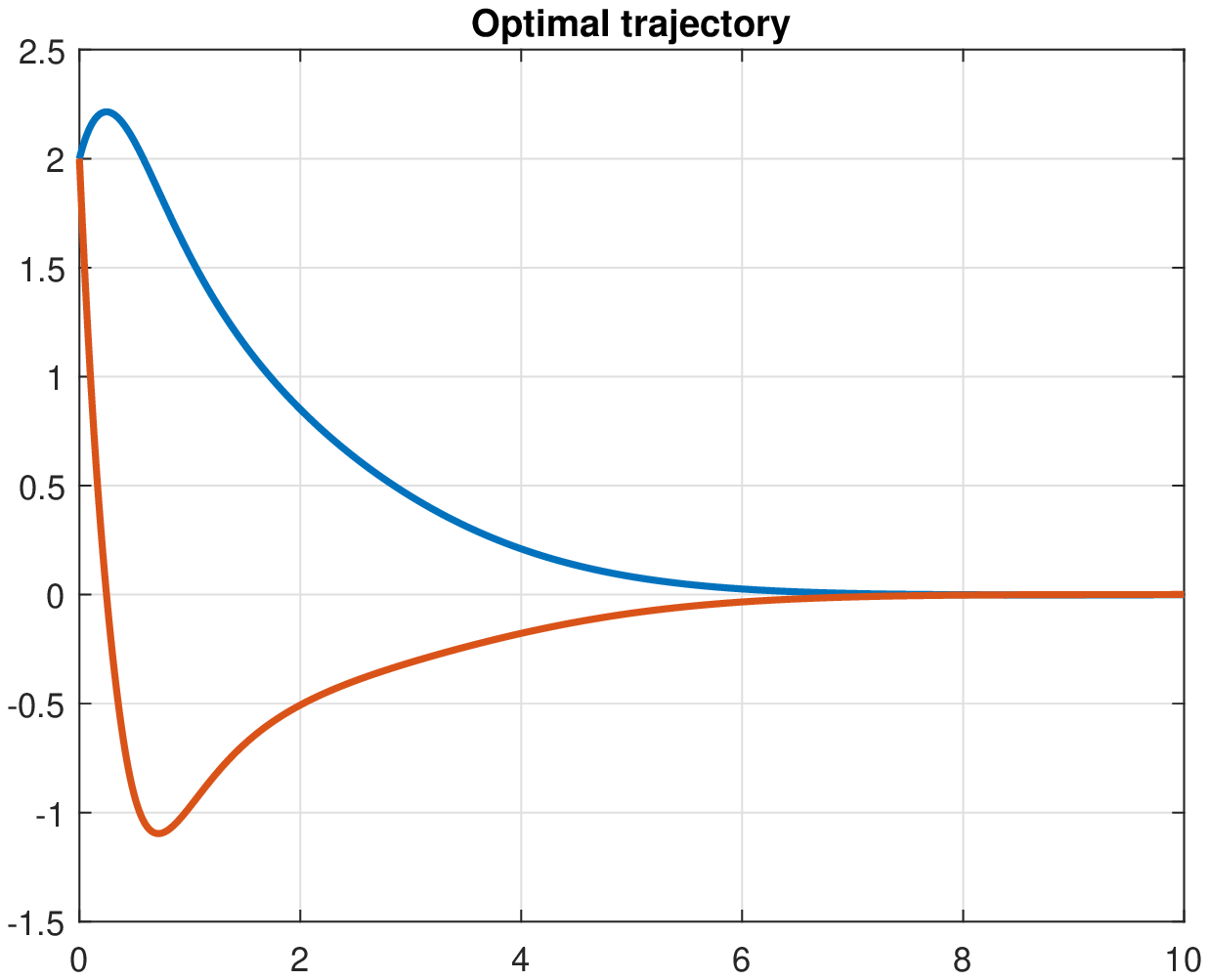}	
\caption{Optimal control for different $u_{max}$ (left) and optimal trajectory (blue: $x_1(t)$, red: $x_2(t)$) of the 2D model for $u_{max}=20$ (right) with $\lambda=10^{-3}$ and $r=6$. This TT rank is sufficient to stabilize the system.}
\label{traj_constraint2}
\end{figure}
%	\begin{table}[htbp]							
%		\centering
%		\begin{tabular}{c c c c}
%					\toprule
%					& $u_{max}=\infty$ &$u_{max}=25$ &$u_{max}=20$
%					     \\
%					\midrule
%					$\lambda=0$ & 70.2123 & 74.5241 &   80.6042 \\
%					$\lambda=10^{-4}$ & 70.2124  & 73.6890 &  78.5408 \\
%					$\lambda=10^{-3}$ & 70.2130 & 72.6739 & 76.5195 \\
%					$\lambda=10^{-2}$ & 70.2131 & 72.4734 & 77.8683  \\
%			\bottomrule		
%		\end{tabular}	
%
%							\caption{Cost functional for different $\lambda$ and $u_{max}$ with $r=6$. \label{Table_constraints2}}
%						
%	\end{table}

\subsection{Lorenz system}

The third example deals with the Lorenz system given by
\begin{equation}
\begin{cases}
\dot{x}= \sigma (y-x), \\
\dot{y}= x(\rho-z)-y+u, \\
\dot{z}=xy-\beta z
\end{cases}
\end{equation}
with the following cost functional
\begin{equation}
J= \int_0^\infty \left( |x(s)|^2+|y(s)|^2+|z(s)|^2 + \gamma |u(s)|^2 \right) \, ds.
\label{cost_lorentz}
\end{equation}
The same example has been considered in \cite{KK21}. We fix $\sigma=10$, $\beta=8/3$ , $\rho=2$ and $(x(0),y(0),z(0))=(-1,-1,-1)$.
Data collection is performed via SDRE and we will consider two cases: $\lambda=0$ and $\lambda=1$. We will vary the regularization parameter $\gamma$ in \eqref{cost_lorentz} in the range $\{0,0.1,0.01,0.001\}$.
We consider 6 Legendre basis functions on the interval $[-1,1]$ and the stopping error threshold for the gradient cross is $10^{-2}$.

In Figure \ref{traj_contr} we show the optimal trajectory and the optimal control with $\gamma=0.001$ and $\lambda=1$.
In Figure \ref{time_steps} we show the number of time steps needed for \texttt{ode45} solver to compute the optimal trajectory in logarithmic scale. It is evident that for small values of the regularization parameter $\gamma$, the ODE solver needs more time steps in the no-gradient case compared to the gradient case. The norm of the difference of the two value functions is $1.2\cdot 10^{-6}$, and the difference of the total cost functionals is $3.5\cdot 10^{-7}$, which is within the requested error threshold. However, the value function approximation computed with $\lambda=1$ appears smoother, allowing to obtain a similar trajectory with fewer time steps.

	\begin{figure}[htbp]	
\centering
	\includegraphics[scale=0.5]{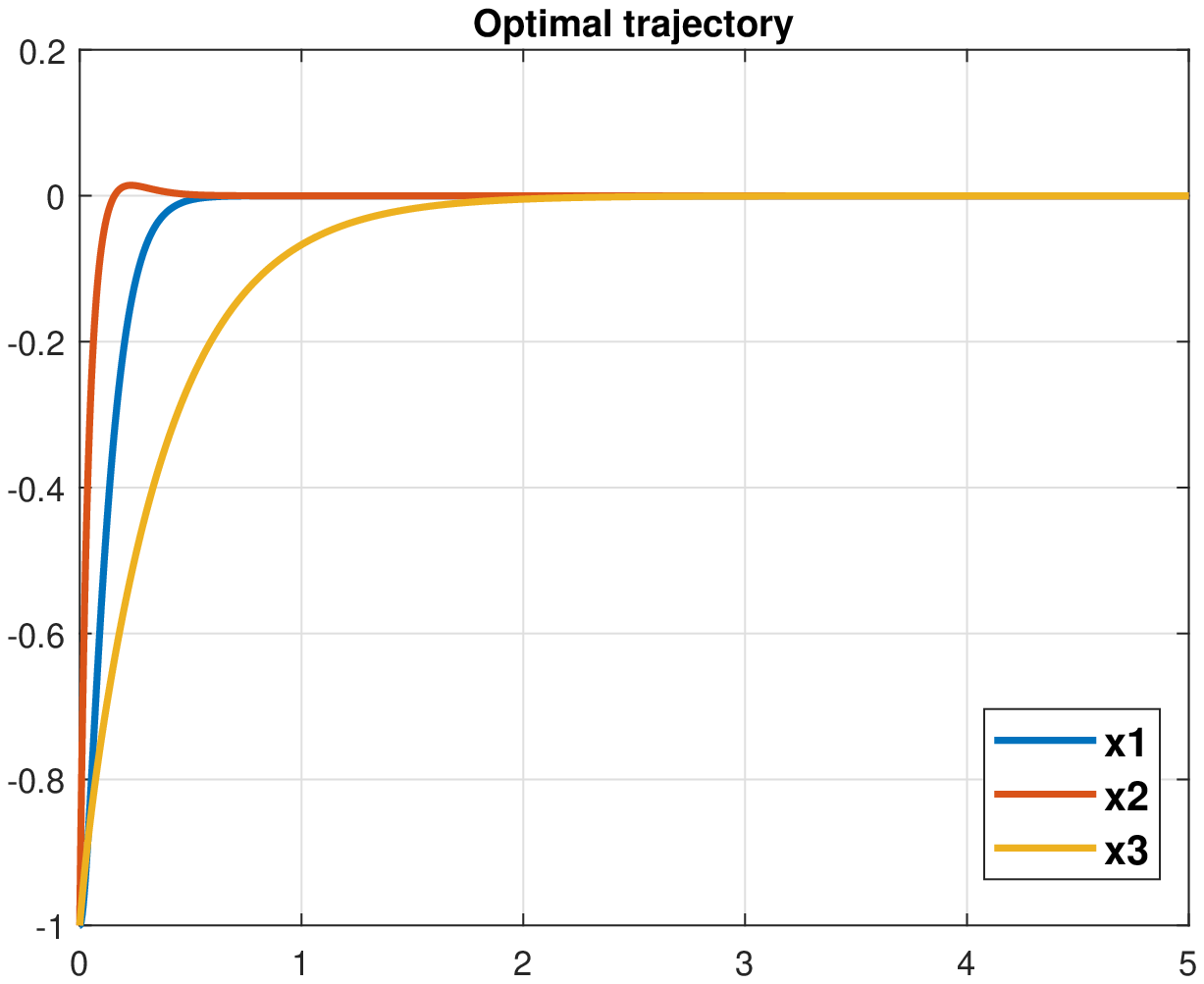}	
	\includegraphics[scale=0.5]{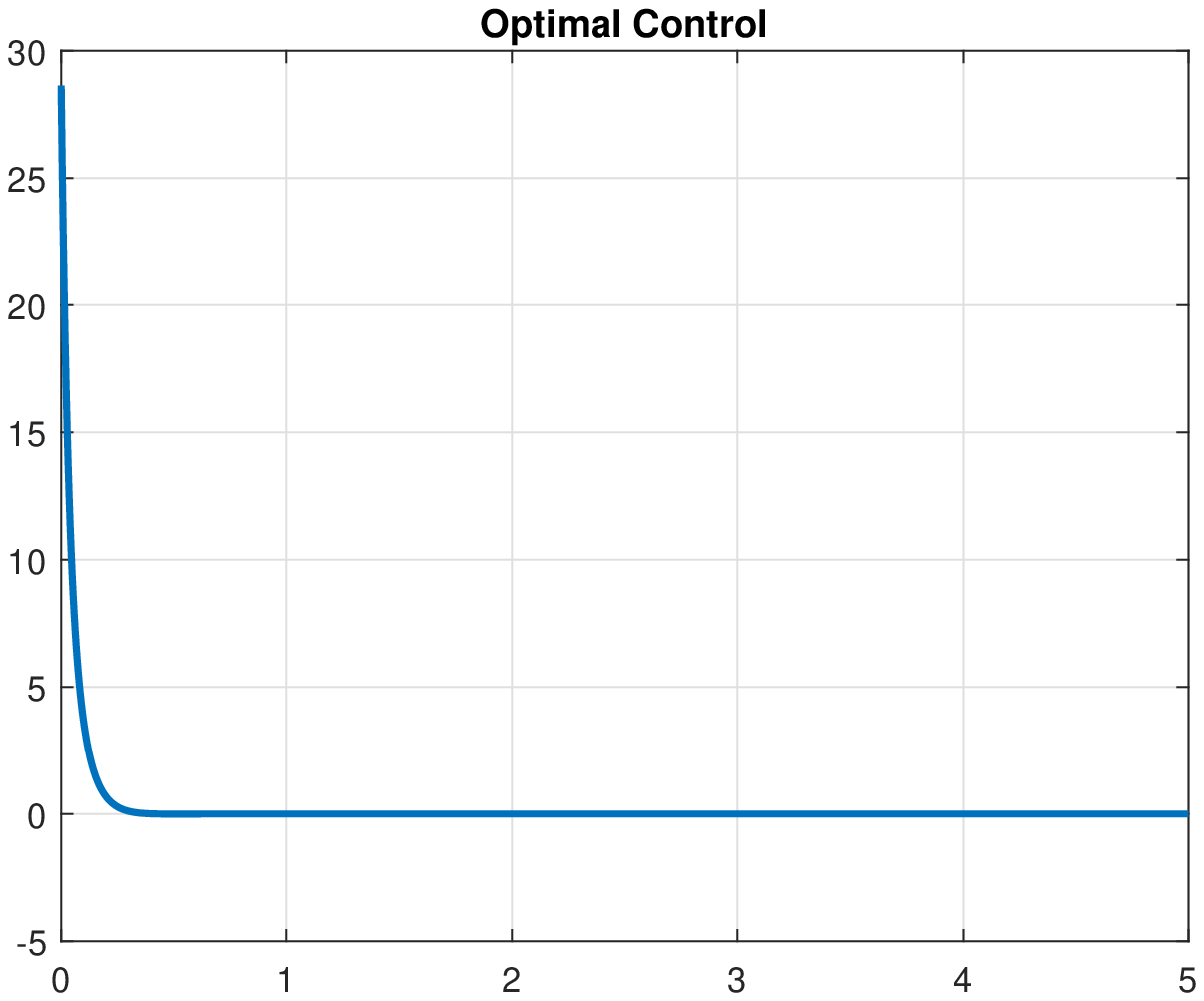}	
\caption{Optimal trajectory (left) and optimal control (right) of the Lorenz model with $\gamma=0.001$ and $\lambda=1$.}
\label{traj_contr}
\end{figure}
	\begin{figure}[htbp]	
\centering
	\includegraphics[scale=0.5]{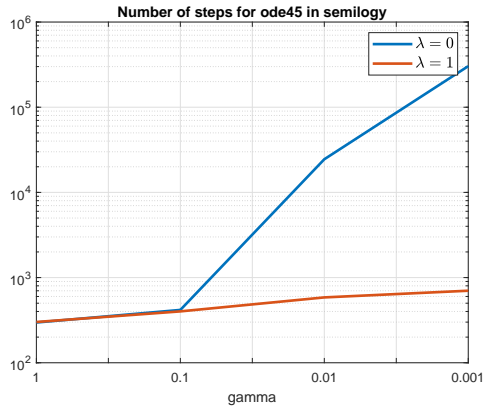}	
\caption{Numbers of time steps needed for the computation of the controlled trajectory of the Lorenz model using \texttt{ode45} for different regularization parameters $\gamma$. For large values of $\gamma$ the two curves overlap, while we note an increasing difference for small $\gamma$. For the smallest $\gamma=10^{-3}$, the no-gradient method needs $3$ orders of magnitude more time steps compared to the gradient method.}
\label{time_steps}
\end{figure}

\subsection{Cucker-Smale model}

Let us consider the dynamics governed by the Cucker-Smale model with $N_a$ interacting agents given by
\begin{equation}
\begin{bmatrix} \dot{y} \\ \dot{v}  \end{bmatrix} = \begin{bmatrix} \mathbb{O}_{N_a} & \mathbb{I}_{N_a} \\ \mathbb{O}_{N_a} & \mathcal{A}_{N_a}(y)  \end{bmatrix} \begin{bmatrix} y \\ v  \end{bmatrix} + \begin{bmatrix} \mathbb{O}_{N_a} \\ \mathbb{I}_{N_a}  \end{bmatrix} u,
\end{equation}
with
$$
\left[ \mathcal{A}_{N_a}(y) \right]_{i,j}= \begin{cases} -\frac{1}{N_a} \sum_{k \neq i} P(y_i,y_k) & if \;i=j, \\
\frac{1}{N_a} P(y_i,y_j) & otherwise,
						 				 \end{cases}
$$
$$
P(y_i,y_j)=\frac{1}{1+\Vert y_i-y_j \Vert^2} \,.
$$
Our aim is to minimize the following cost functional
$$
J(y(\cdot),v(\cdot),u(\cdot))= \frac{1}{N_a} \int_0^\infty \Vert y(s) \Vert^2 + \Vert v(s) \Vert^2 + \Vert u(s) \Vert^2\, ds.
$$
We consider a state domain $\Omega=[-0.5,0.5]^{2N_a}$, $5$ Legendre basis functions in each variable, and the gradient cross stopping tolerance is equal to $10^{-2}$.
We first compare PMP and SDRE to check their performances in producing samples of the cost.
In this case we fix $N_a=2$ and final time $T=20$ for the corresponding finite horizon control problem for PMP. The PMP system \eqref{pmp} is solved via the MATLAB function \texttt{bvp4c}. It is possible to supply this function with the Jacobian of the differential equation to accelterate the algorithm and to obtain a more accurate solution. We will denote by PMP the resolution of the system \eqref{pmp} without the the knowledge of the Jacobian, while by PMPJ the one enriched by this further information. The results of the comparison are shown in Table \ref{Table_comparison}.

	\begin{table}[htbp]							
		\centering
		\begin{tabular}{c c c c}
					\toprule
					& CPU & $\tilde J_T$ & $\tilde y_{\max}(T)$
					     \\
					\midrule
					SDRE & $0.5s$ & 0.150168 &    1.5e-8 \\
					PMP & $33s$  & 0.150173 &  2.0e-6 \\
					PMPJ & $24s$  & 0.150173 &  6.2e-7 \\

			\bottomrule		
		\end{tabular}	

							\caption{Comparison between SDRE, PMP and PMPJ on the Cucker-Smale model with $N_a=2$. Here SDRE is the best method. \label{Table_comparison}}
						
	\end{table}

We can notice that SDRE is 66 times faster than PMP and 48 times faster compared to PMPJ, obtaining almost the same result in terms of the total cost, and a much smaller absolute value of the final state than PMPJ and PMP. In the following run we use the SDRE approach to generate the data for the TT Gradient Cross.

We turn our attention to higher dimensional problems. We first analyse the behaviour of the Gradient Cross algorithm increasing the dimension $d=2N_a$. In Figure \ref{Fig_shade1} we compare the error in the cost functional and the maximum reached by the dynamics at the final time increasing the dimension $d$ from 4 to 20. We consider two cases, one in the absence of gradient information ($\lambda=0$) and the other one with $\lambda=10^{-3}$.
The shaded areas in the plots are encircled by mean $\pm$ 1 standard deviation over 10 trials.
For example in the case of the error in the cost we consider the mean $\overline{err}_J(d)$ and the corresponding standard deviation $\sigma_{err_J}(d)$. The shaded area is created considering for each dimension the interval $[\overline{err}_J(d)-\sigma_{err_J}(d),\overline{err}_J(d)+\sigma_{err_J}(d)]$. We see that the mean and the standard deviation are both lower in the case with gradient information and this difference grows with increasing dimension of the problem. In Figure \ref{Fig_shade2} we show the comparison in terms of TT ranks and evaluations needed by the Gradient Cross. It is important to point that the TT ranks fluctuate in the same interval $[11.5,~16]$ for all dimensions, proving that we are really solving the curse of dimensionality. In terms of the number of evaluations, for lower dimensions the case with $\lambda=10^{-3}$ presents a higher mean, but for higher dimensions we obtain a decreasing behaviour in contrast to the case with $\lambda=0$.

	\begin{figure}[H]	
\centering
	\includegraphics[scale=0.5]{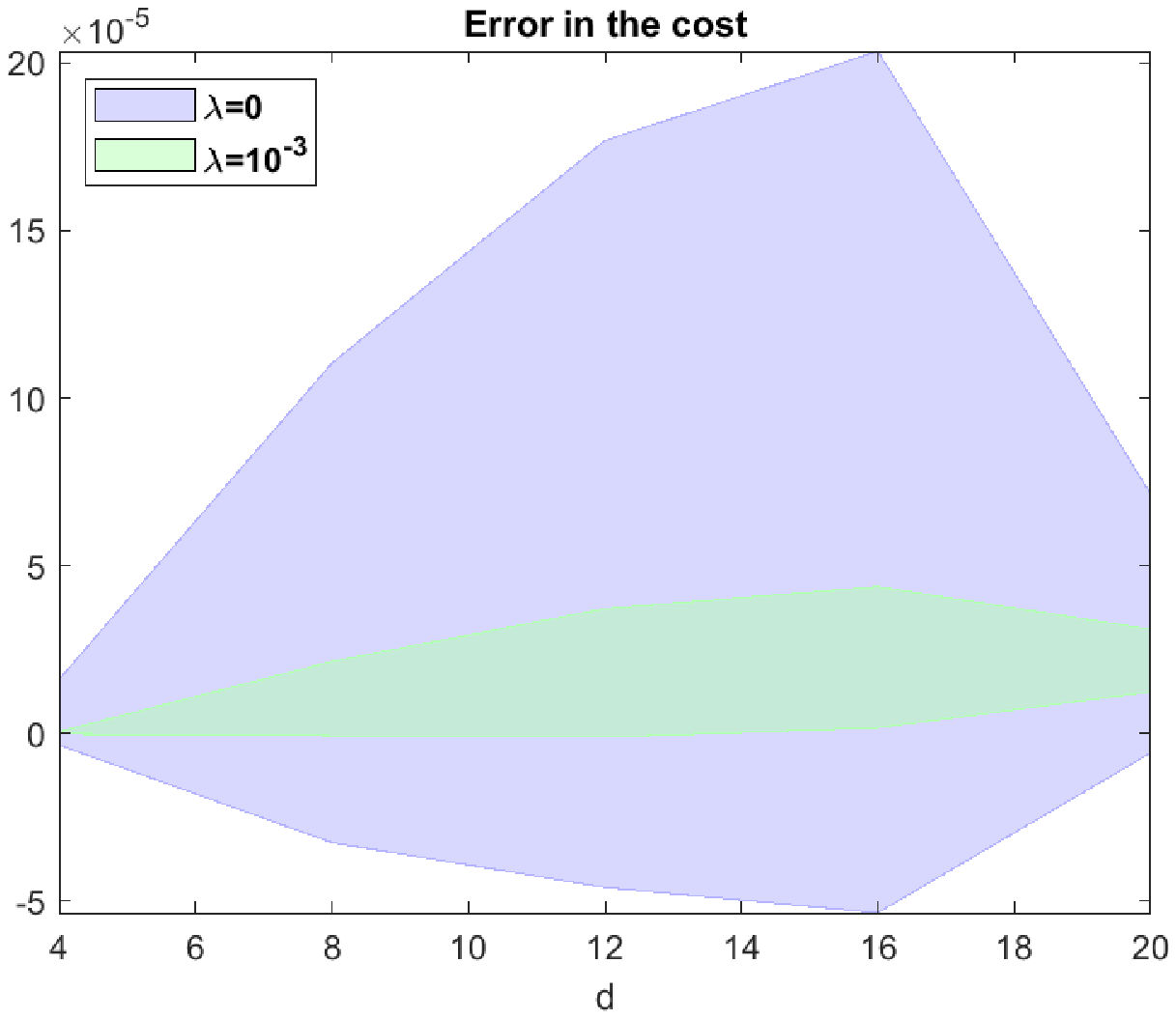}	
	\includegraphics[scale=0.5]{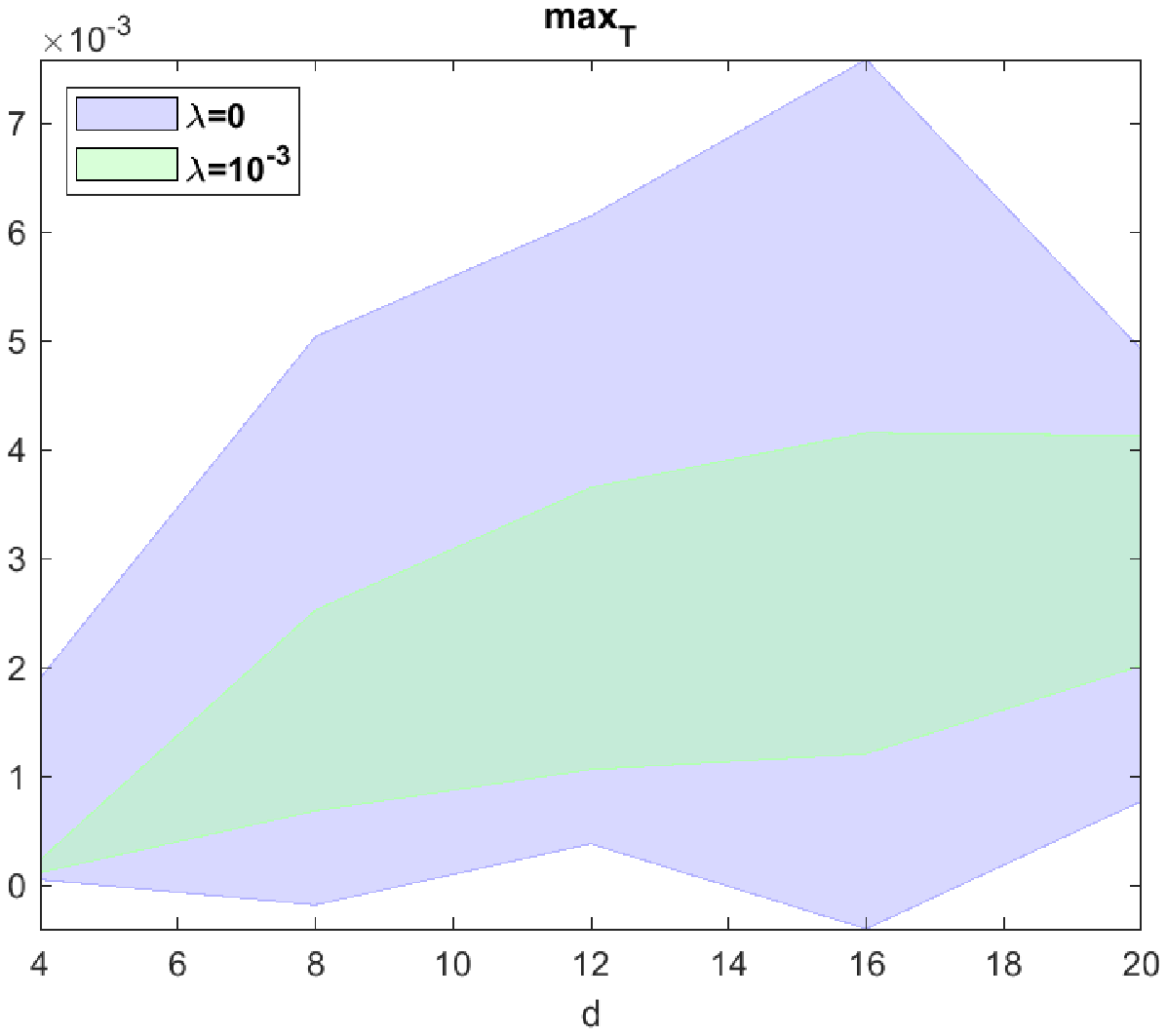}	
\caption{Error in the Cucker-Smale cost functional (left) and $\tilde y_{\max}(T)$ (right) for $\lambda=0$ (blue) and $\lambda=10^{-3}$ (green). Shaded area denotes mean $\pm$ 1 standard deviation over $10$ runs. Here $\lambda=10^{-3}$ gives a more accurate approximation.}
\label{Fig_shade1}
\end{figure}

	\begin{figure}[H]	
\centering
	\includegraphics[scale=0.5]{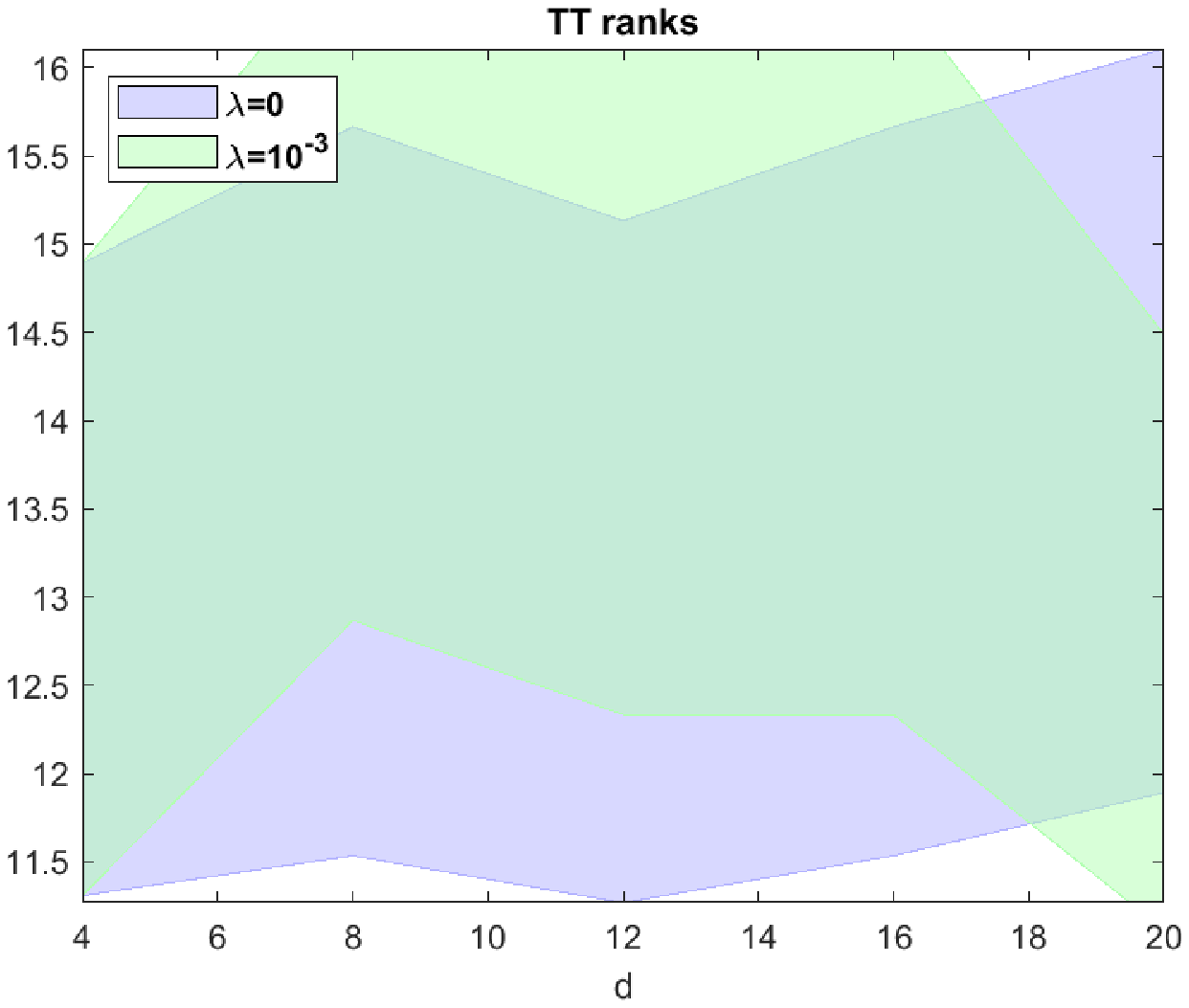}	
	\includegraphics[scale=0.5]{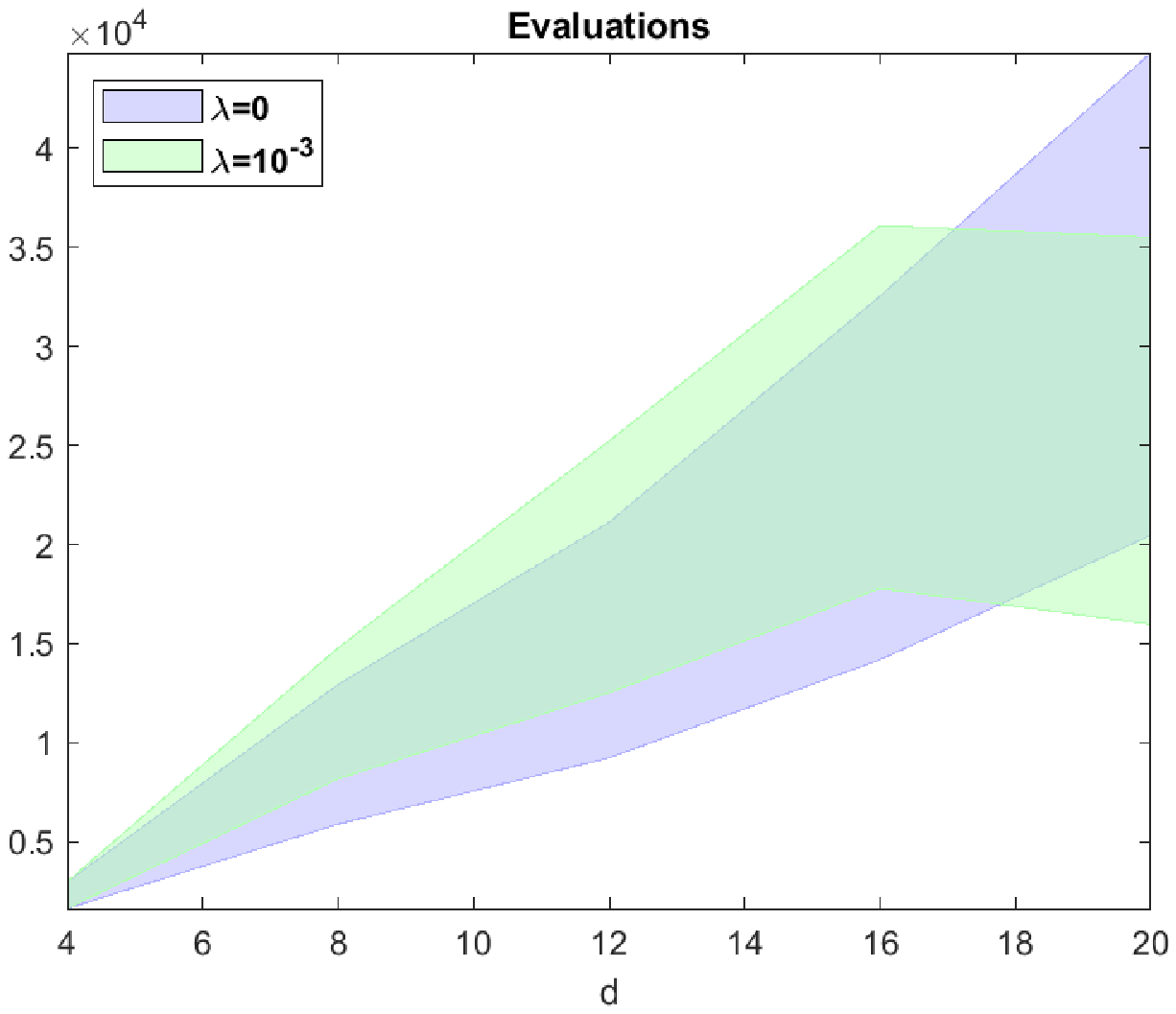}	
\caption{TT ranks (left) and number of evaluations (right)  for $\lambda=0$ (blue) and $\lambda=10^{-3}$ (green). Shaded area denotes mean $\pm$ 1 standard deviation over $10$ runs. Both methods show comparable complexity, though $\lambda=10^{-3}$ has less variation from run to run.}
\label{Fig_shade2}
\end{figure}

The choice of the parameter $\lambda$ is crucial in this approach. We show a comparison of the performances for different $\lambda$ and in different dimensions. Since the computational costs of our previous tests  are comparable, we will focus instead on the error in the cost functional and on $\tilde y_{\max}(T)$.
We consider a finite set $\Lambda$ for the variable $\lambda$ and we minimize the total cost and the final maximum value on this set. The result of the minimization will provide the best choice for the parameter $\lambda$. In Figure \ref{Fig_teta} we report the results for these quantities. The parameter $\lambda$ is taken from the set $\Lambda=\{0\} \cup \{10^{-k}, k=0,\ldots, 6\}$, and the dimension $d$ varies in the range $\{10,20,30,40\}$. The minimum for each dimension is marked with a circle. First of all, we can notice that in all the cases the parameter $\lambda=0$ never represents the optimal choice. We can notice that we obtain a parameter independent on the dimension since in almost all cases $\lambda=10^{-6}$ represents the optimal choice.
	\begin{figure}[htbp]	
\centering
	\includegraphics[scale=0.5]{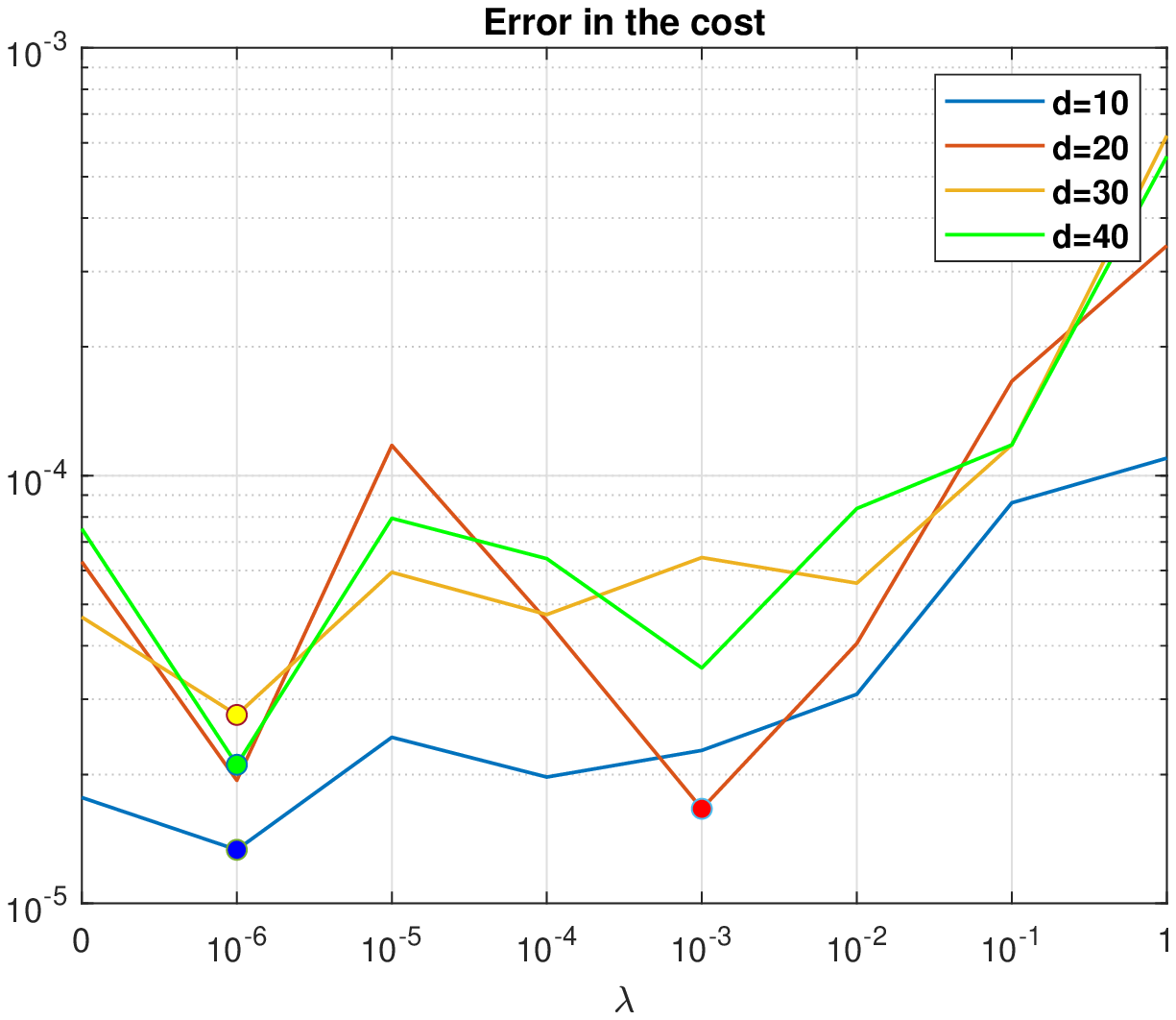}	
	\includegraphics[scale=0.5]{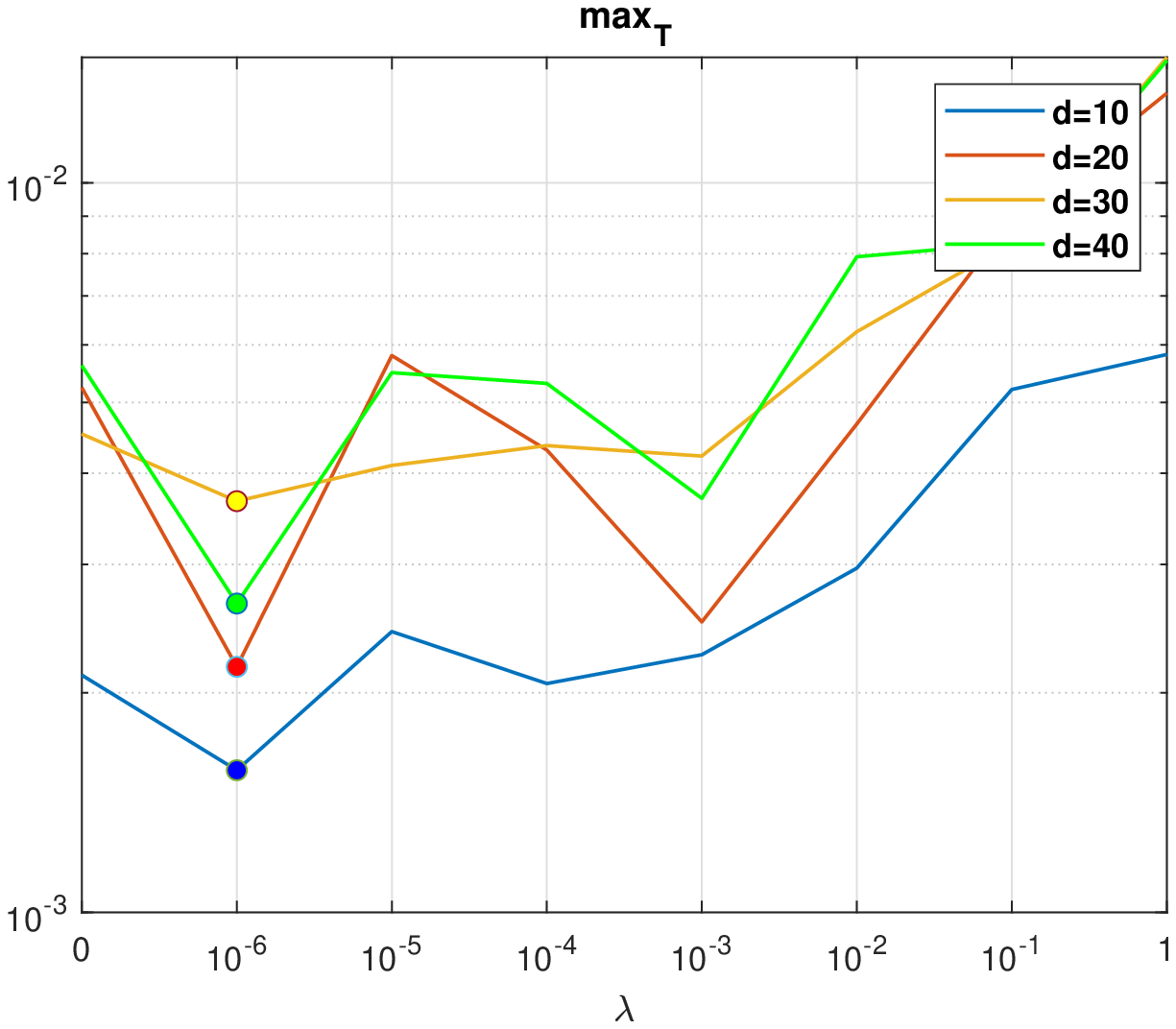}	
\caption{Error in the Cucker-Smale cost functional (left) and $\tilde y_{\max}(T)$ (right) for different $\lambda$ and dimensions. It is possible to notice that $\lambda=10^{-6}$ represents the optimal choice in almost all cases studied.}
\label{Fig_teta}
\end{figure}

In Table \ref{Table_cpu_sdre} we report the averaged elapsed time in computing the suboptimal control applying directly SDRE (first column), via the value function precomputed by the TT (second column) and via the value function precomputed by the Two Boxes approach (third column). We immediately note that the evaluation of TT is 2 orders of magnitude faster than the online SDRE solution, proving the efficiency in precomputing the value function when a real-time solution is needed. By comparing the final two columns we notice a small speed-up, showing the beneficial application of LQR in a region close to the origin.

	\begin{table}[htbp]							
		\centering
		\begin{tabular}{c | c c c}
					\toprule
				$d$	& SDRE & TT & Two Boxes
					     \\
					\midrule
					10 &  $1.4e-3s$  & $ 1.8e-5s$   & $1.7e-5s$  \\
					20 &  $5.4e-3s$   & $6.9e-5s$ &   $6.4e-5s$ \\
					30 & $1.0e-2s$   &  $1.6e-4s$ &  $1.4e-4s$ \\
					40 & $2.2e-2s$   & $3.3e-4s$ &  $1.9e-4s$  \\
                   100 & $1.3e-1s$ & $5.3e-3s$ &  $4.5e-3s$ \\
			\bottomrule		
		\end{tabular}	

							\caption{ Averaged CPU time for a single computation of the suboptimal control for the different methods. \label{Table_cpu_sdre}}
						
	\end{table}

\subsubsection*{Comparing with Neural Networks}

The aim of this section is to compare the proposed technique with a supervised learning approach discussed in \cite{ABK21}. In this work the authors generate a dataset using a SDRE approach and train a neural network to learn directly a suboptimal feedback map $u(x)$. We will compare  against this approach in the framework of agent-based dynamics. 

In Figure \ref{Fig_TT_NN} we compare the two approaches varying the dimension $d \in \{10,20,30,40\}$. The choice of the parameter $\lambda$ for TT follows the previous paragraph. For the NN approach we consider the number of samples equal to the number of evaluations needed for TT, in this way the two strategies will have the same computational complexity. We can deduce that the TT approach is up to one order of magnitude more accurate in terms of both indicators.

			\begin{figure}[htbp]	
\centering
	\includegraphics[scale=0.5]{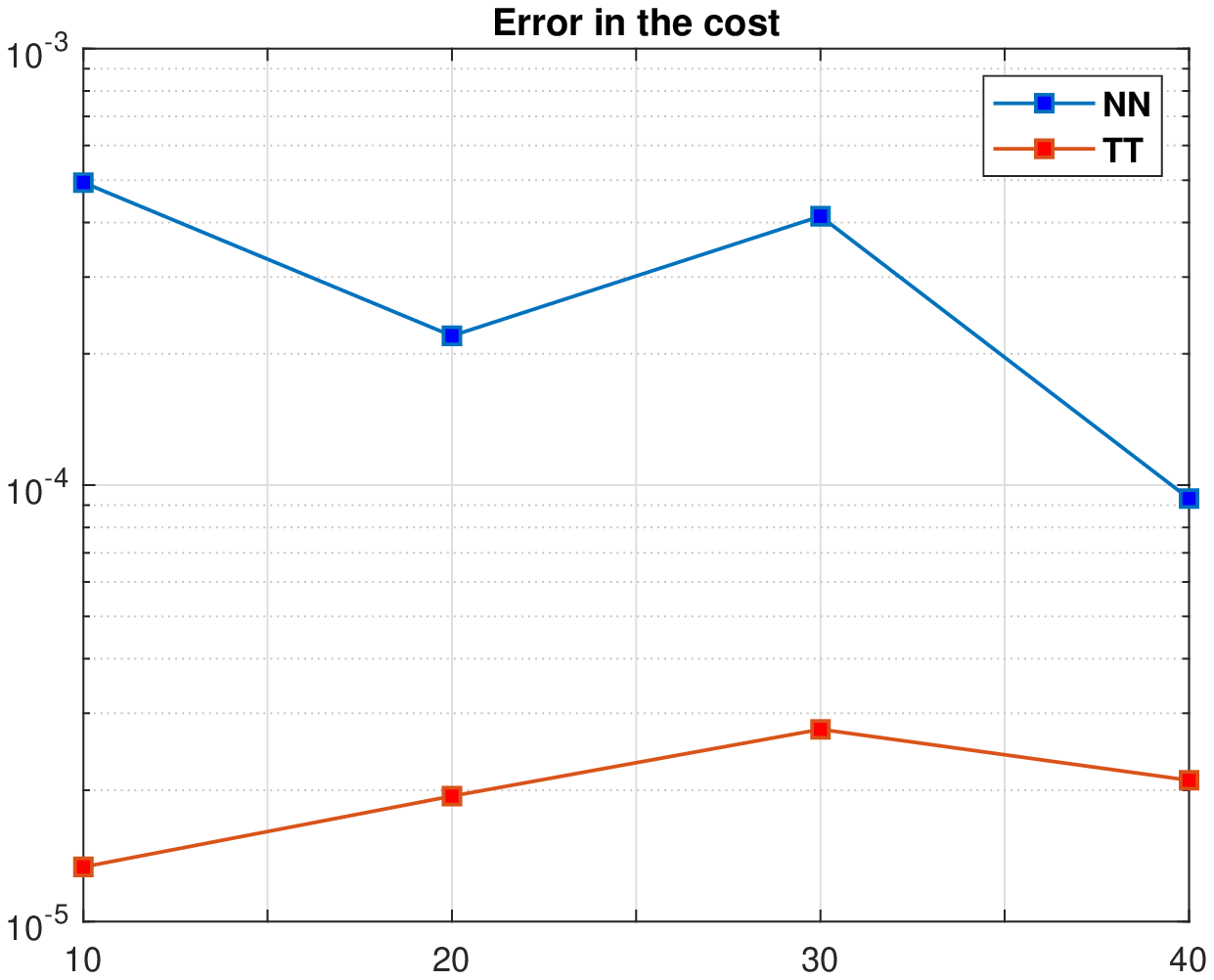}	
	\includegraphics[scale=0.5]{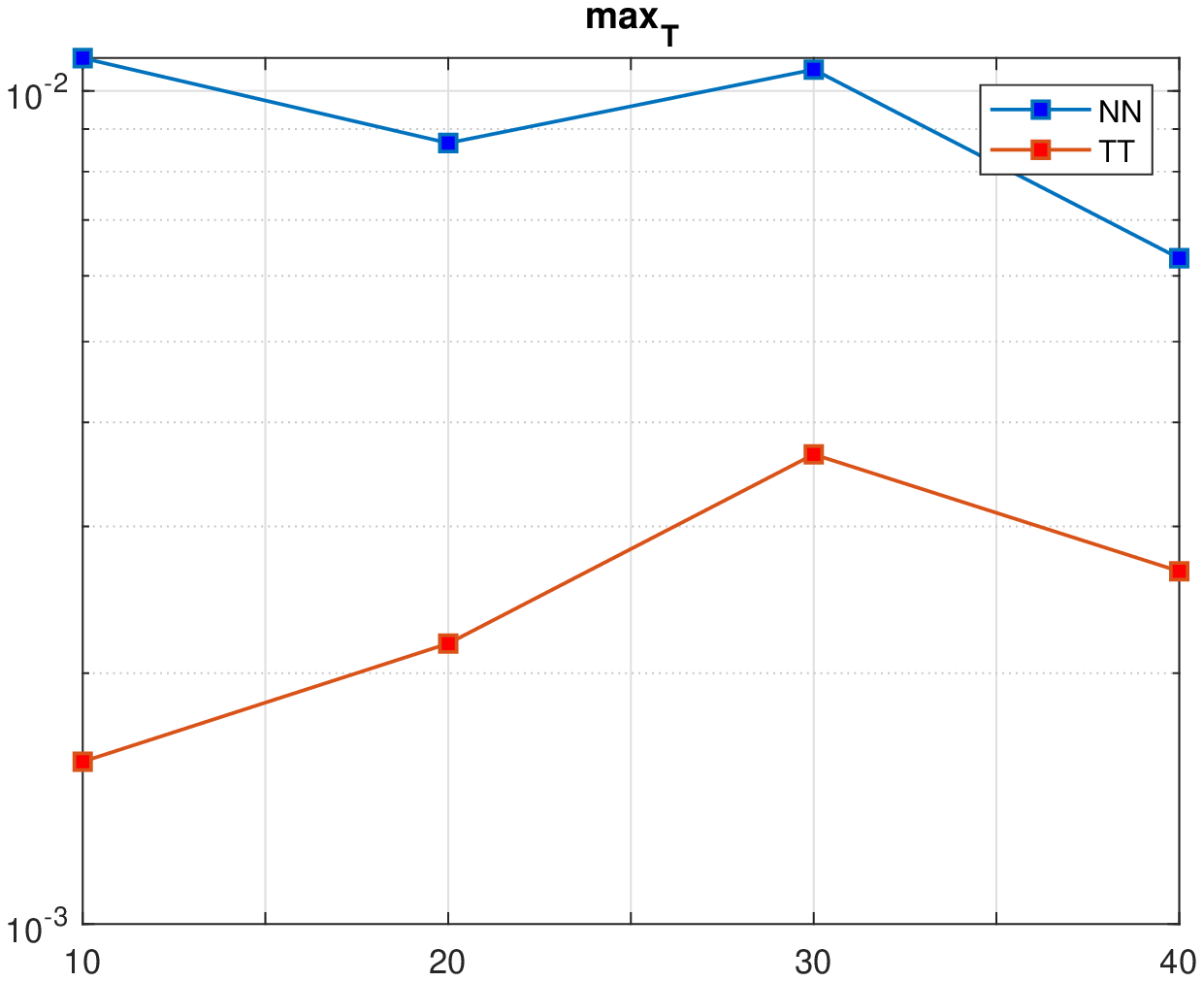}	
\caption{Error in the cost functional (left) and $\tilde y_{\max}(T)$ (right) for TT and NN for different dimensions. The TT approximation is more accurate in terms of both indicators for all dimensions.}
\label{Fig_TT_NN}
\end{figure}

Now we fix the dimension $d=40$ and we show the optimal trajectories for the two approaches in Figure \ref{Fig_na20}. As noticed in this figure and in the right panel of Figure \ref{Fig_TT_NN}, the NN is far from the equilibrium, while TT is evidently close. In Figure \ref{Fig_na20_zero} we show the optimal trajectories starting from $x_0=\underline{0}\in \mathbb{R}^{40}$ for TT (left panel) and NN (right panel). We note that all the components deviate from the equilibrium. The first $N_a$ components stabilize around a point different from the origin, while the last $N_a$ components return to 0.

			\begin{figure}[htbp]	
\centering
	\includegraphics[scale=0.5]{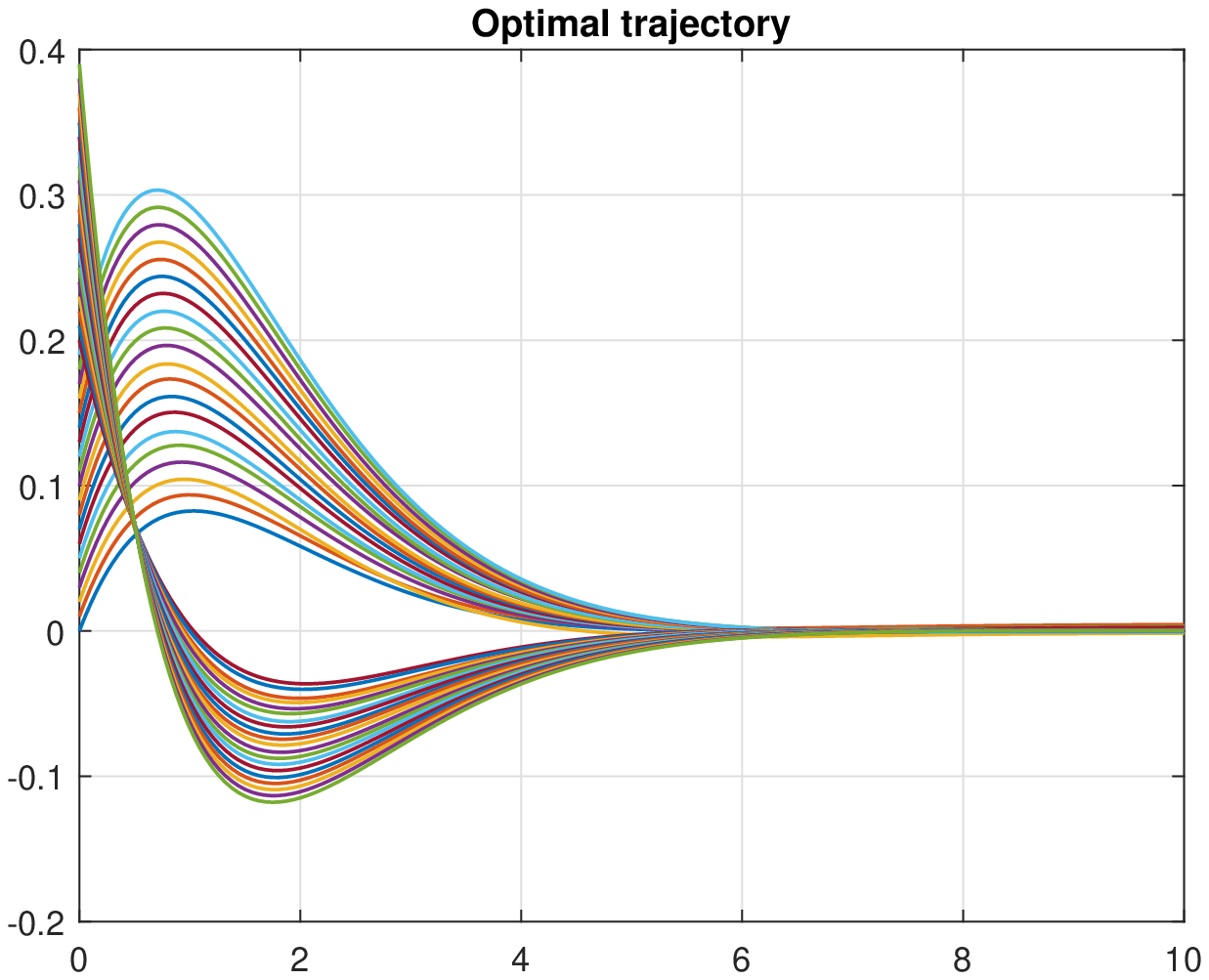}	
	\includegraphics[scale=0.5]{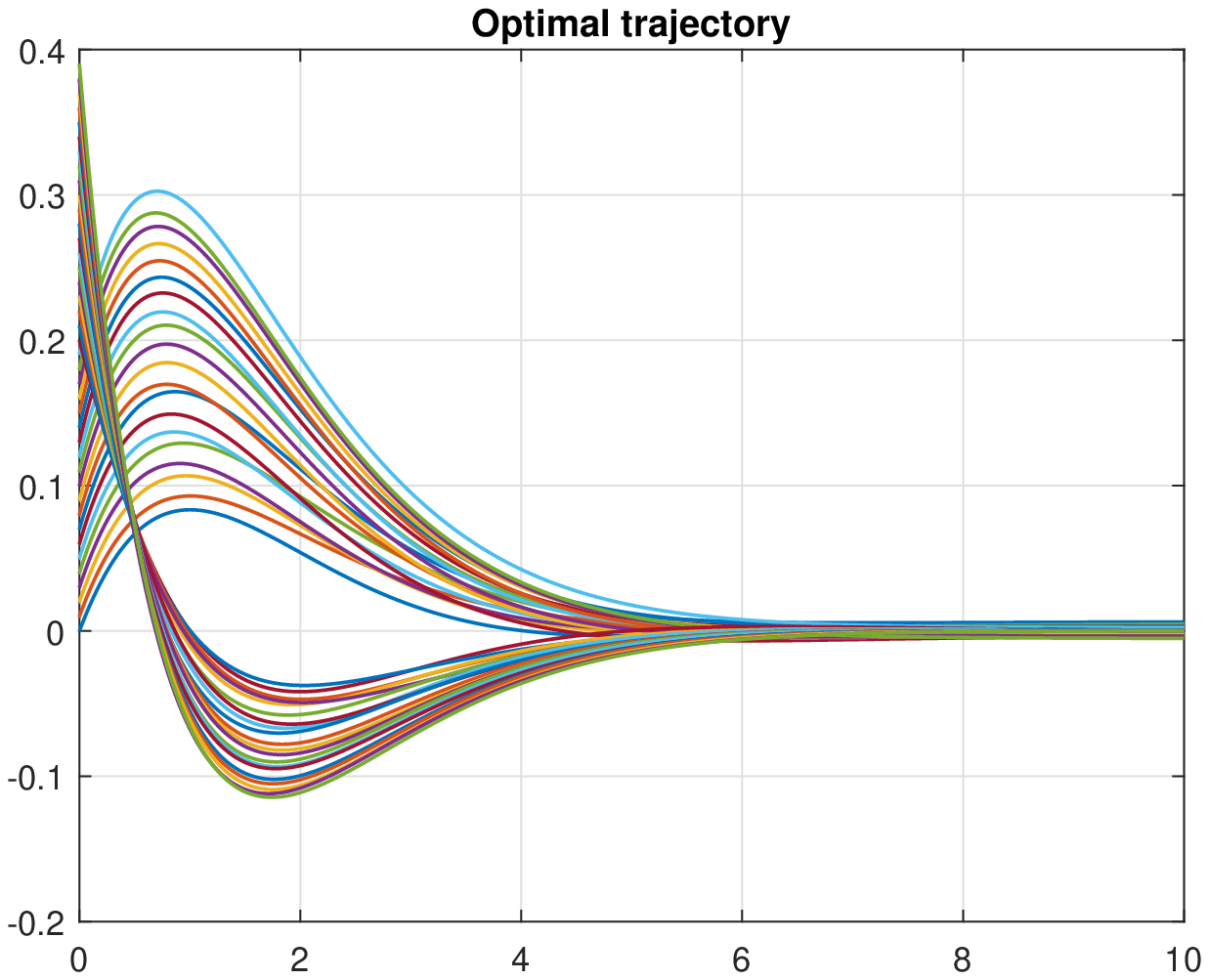}	
\caption{Optimal trajectory for TT (left) and NN (right) with $d=40$. Neither of the methods stabilizes the trajectory exactly towards the origin.}
\label{Fig_na20}
\end{figure}

			\begin{figure}[htbp]	
\centering
	\includegraphics[scale=0.5]{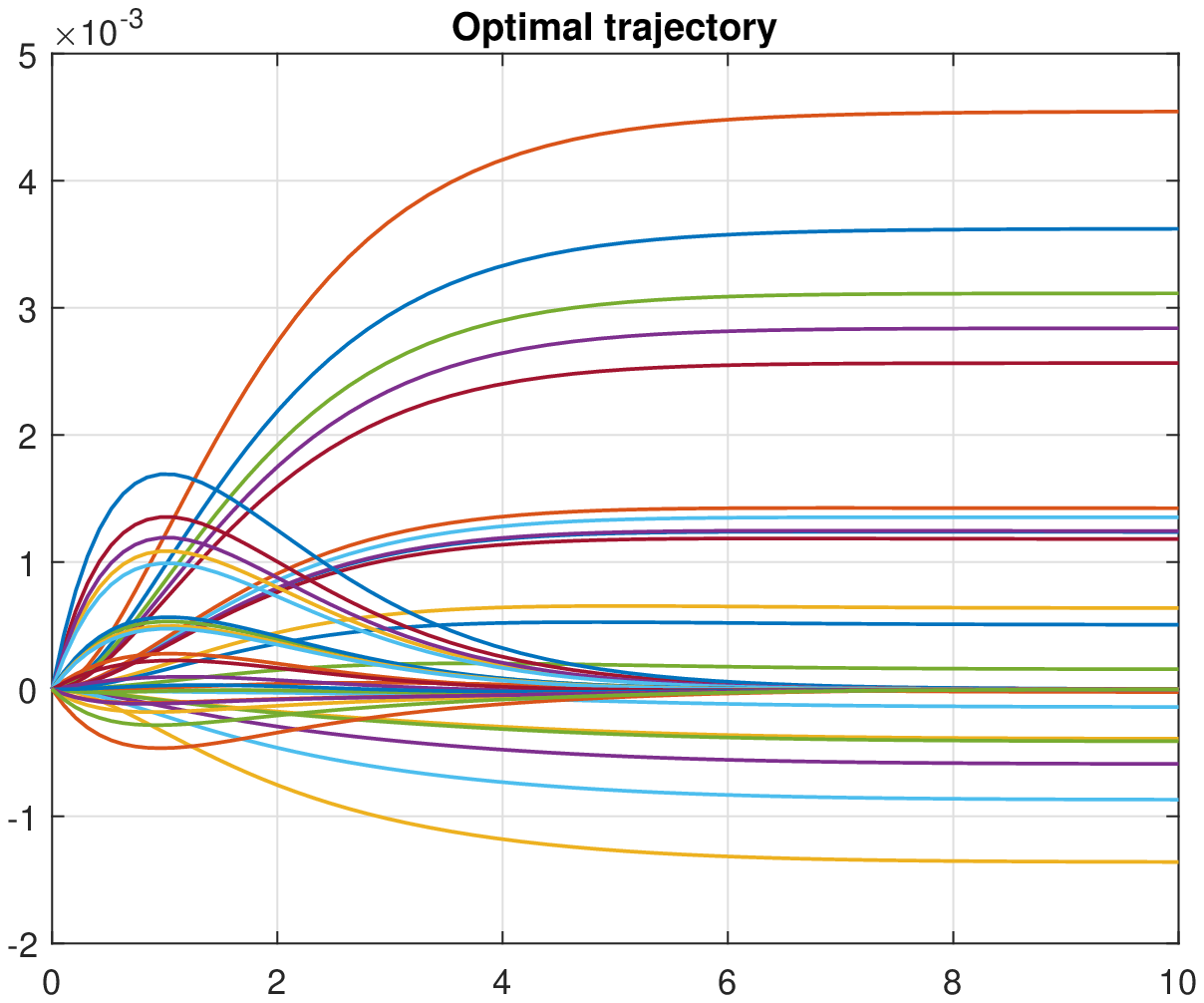}	
	\includegraphics[scale=0.5]{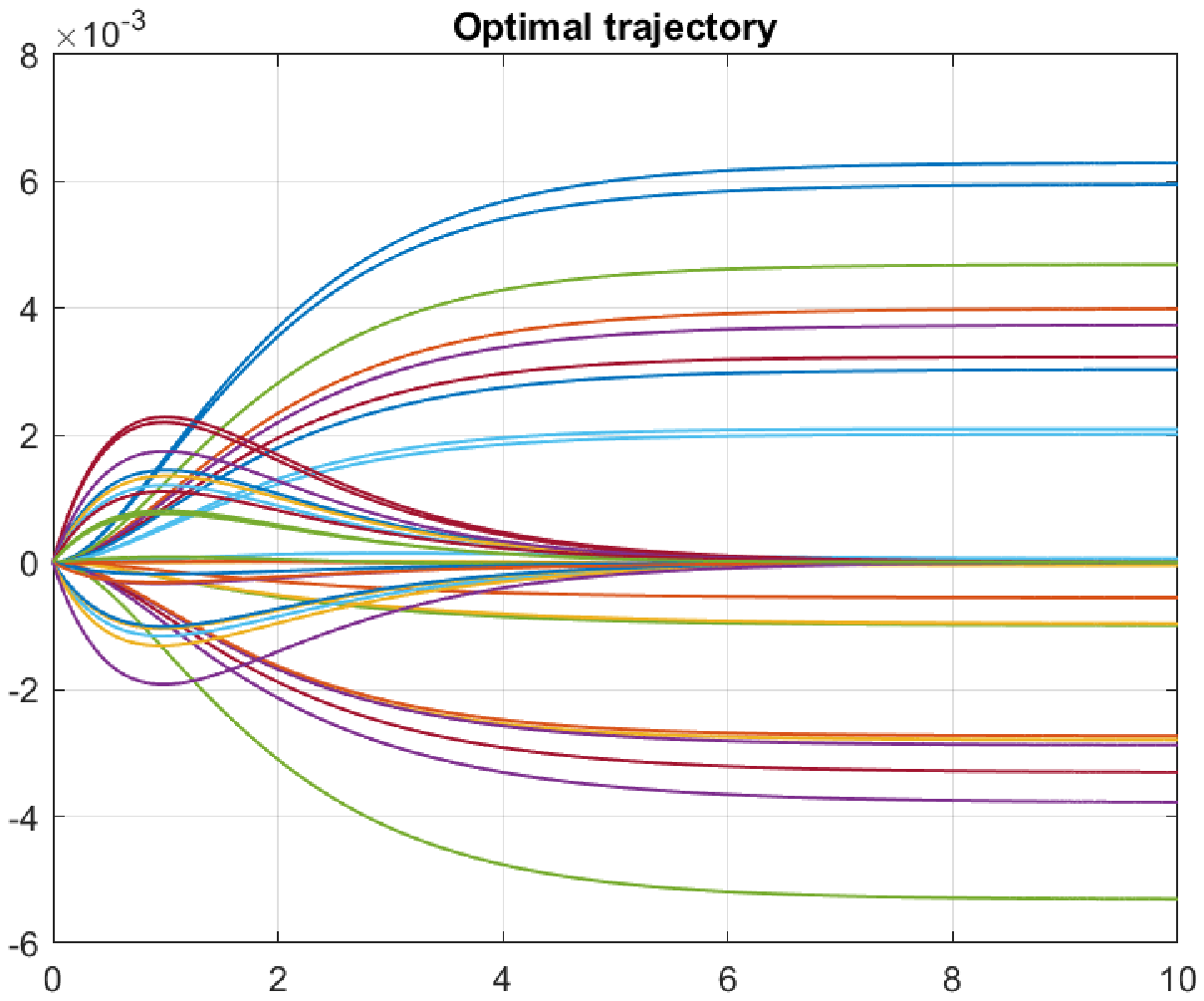}	
\caption{Optimal trajectory for TT (left) and NN (right) with $d=40$ starting from the origin. The spurious nonzero state value at the final time is used to define the switching threshold in the Two Boxes approach.}
\label{Fig_na20_zero}
\end{figure}

To improve the stabilization near the origin, we apply the Two Boxes (TB) approach to reduce the quantity $\tilde y_{\max}(T)$ for both methods. We recall that we are going to consider the sub-domain $[-a_{TB},a_{TB}]^{40}$, where $a_{TB}=2 \tilde y^0_{\max}$ with $\tilde y^0_{\max}$ computed from Figure~\ref{Fig_na20_zero}.
We set $a_{TB}=0.009$ for TT and $a_{TB}=0.0126$ for NN and we apply LQR in the smaller box. The results are shown in Table \ref{Table_comparison_NN}. As discussed previously, without the application of the TB algorithm, TT results are more accurate. Coupling the two methods with TB and LQR, we see by the second line of Table \ref{Table_comparison_NN} a remarkable improvement in terms of the final state magnitude.
% On the other hand, the introduction of the TB approach is deleterious for TT $err_J$ which is affected by an increase.
Finally, we show in Figure \ref{Table_comparison_NN} the optimal trajectories in this case, where the stabilization to the origin is more visible for both approaches.

	\begin{table}[htbp]							
		\centering
		\begin{tabular}{c | c c c c}
					\toprule
				Method	& NN $err_J$ & TT $err_J$ & NN $\tilde y_{\max}(T)$ & TT $\tilde y_{\max}(T)$
					     \\
					\midrule
					Simple & 2.3e-4 & 2.5e-5  & 6.2e-3 & 4.3e-3 \\
					Two Boxes + LQR & 2.0e-4 & 3.7e-5 & 4.8e-4 & 3.3e-4\\

			\bottomrule		
		\end{tabular}	
							\caption{Comparison between NN and TT with $\lambda = 10^{-6}$ for $d=40$. With the Two Boxes technique, both methods are more capable to stabilize the state.\label{Table_comparison_NN}}
						
	\end{table}

	\begin{figure}[htbp]	
\centering
	\includegraphics[scale=0.5]{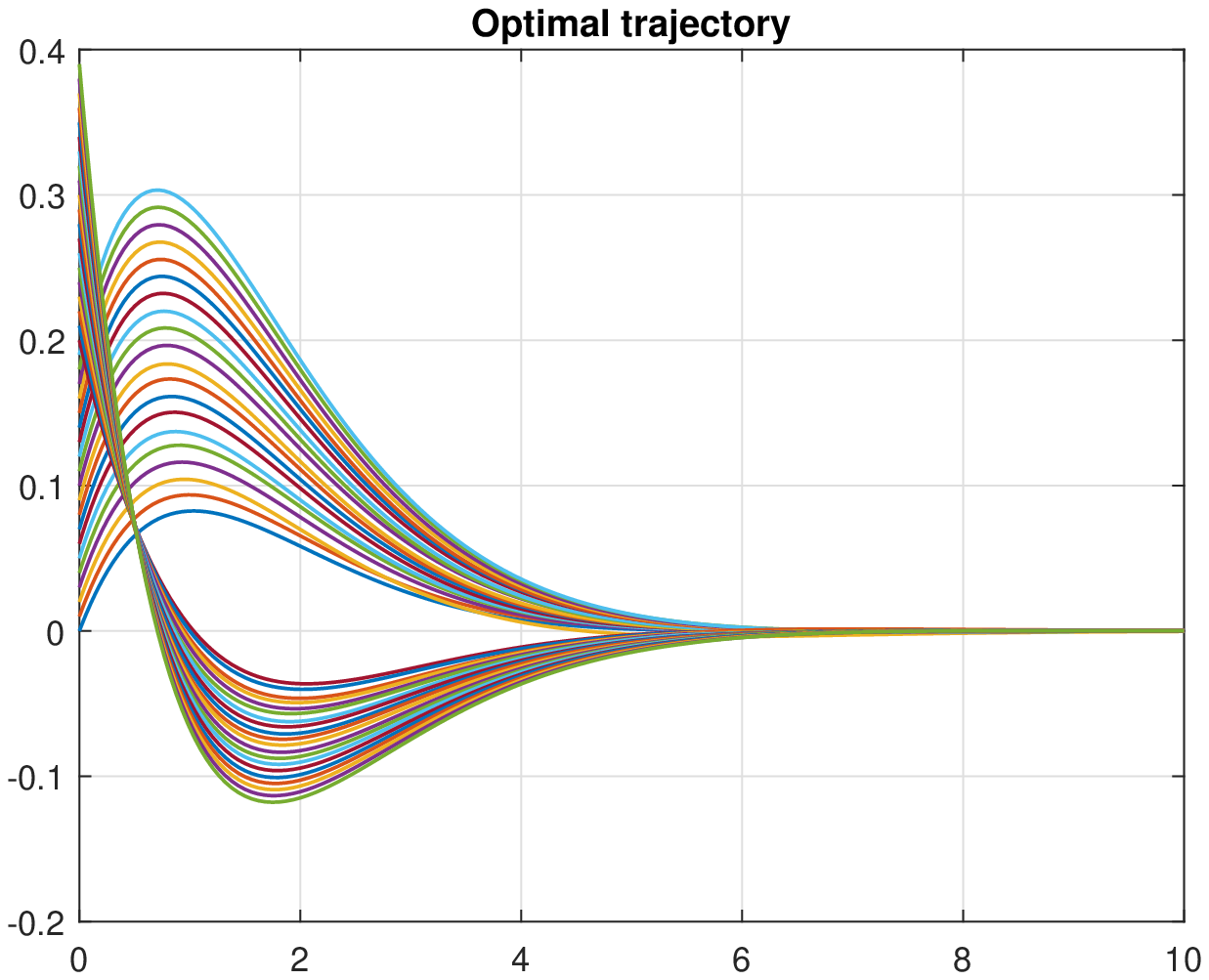}	
	\includegraphics[scale=0.5]{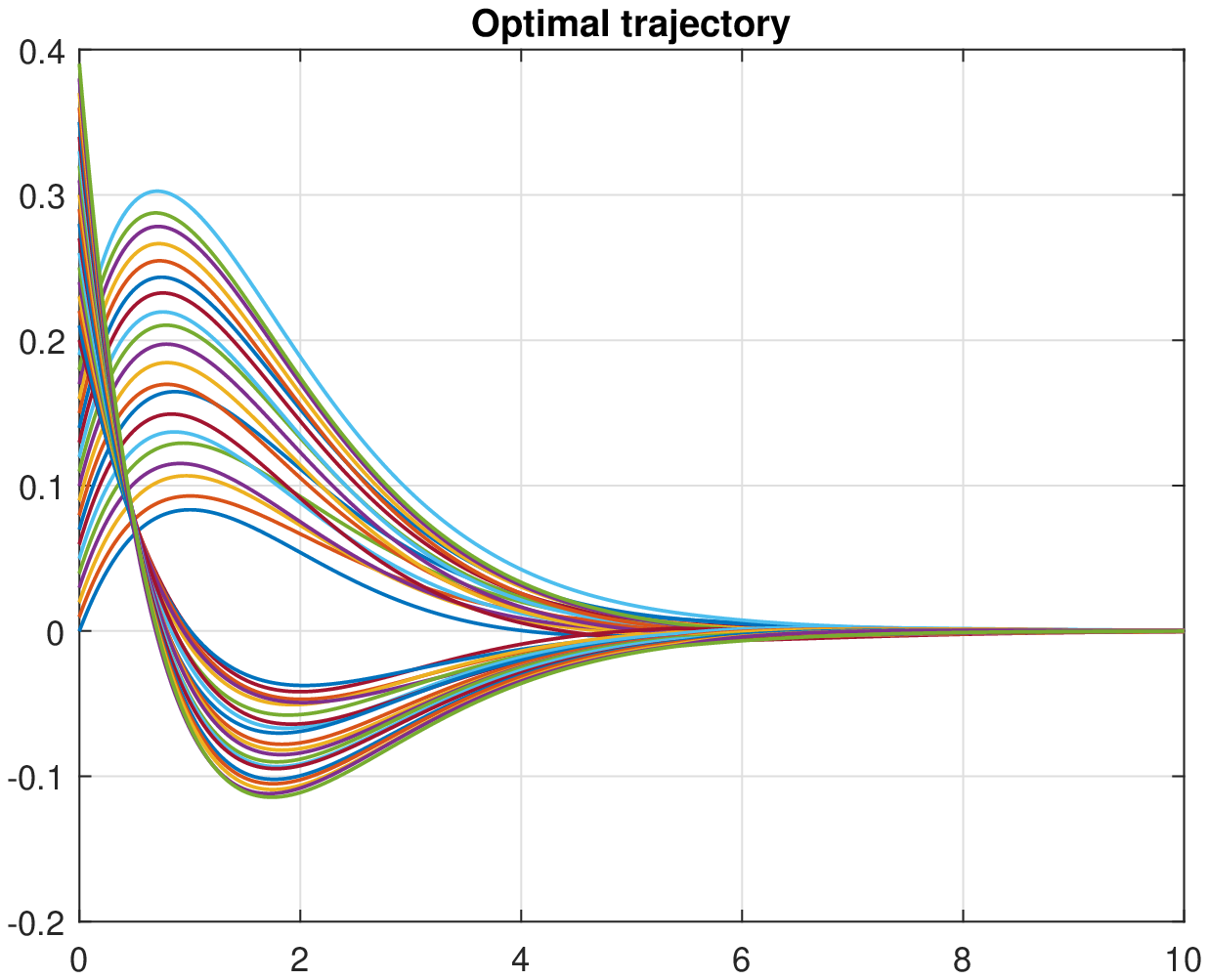}	
\caption{Optimal trajectory for Two Boxes TT (left) and Two Boxes NN (right) with $d=40$. Both methods are stabilizing.}
\label{Fig_TB}
\end{figure}

Finally, we test the TT approach and the Neural Network strategy fixing the dimension $d=100$. For the TT cross we fix $\lambda=10^{-4}$ and the resulting approximation has TT rank equal to 16, in line with the outcome of the left panel of Figure \ref{Fig_shade2}. Table \ref{Table_comparison_NN_100} shows the results of the comparison considering both the simple algorithm and the coupling with the TB approach. We set $a_{TB}=8.6\cdot 10^{-3}$ for TT and $a_{TB}=1.2 \cdot 10^{-2}$ for NN, applying LQR in the smaller box. In this case we note that the TT approximation is two orders of magnitude more accurate than NN in both methods in terms of the error in the cost functional. Furthermore, the application of TB is beneficial for $\tilde y_{\max}(T)$ for both methods, keeping the error in the cost functional unaffected.

	\begin{table}[htbp]							
		\centering
		\begin{tabular}{c | c c c c}
					\toprule
				Method	& NN $err_J$ & TT $err_J$ & NN $\tilde y_{\max}(T)$ & TT $\tilde y_{\max}(T)$
					     \\
					\midrule
					Simple & 2.7e-4
  & 2.8e-6  & 6.0e-3 & 1.5e-3 \\
					Two Boxes + LQR & 2.7e-4   &  2.8e-6 &  4.0e-4 &  4.2e-4\\

			\bottomrule		
		\end{tabular}	
						\caption{Comparison between NN and TT with $\lambda = 10^{-4}$ for $d=100$. The TT approximation achieves an accuracy of two of magnitudes more than NN in terms of error in the cost functional.\label{Table_comparison_NN_100}}
						
	\end{table}

\section{Conclusions}

We have developed a data-driven method for the approximation of high-dimensional infinite horizon optimal control laws. A key feature of the data-driven methodology is that it circumvents the solution of a HJB PDE, a task that quickly becomes overwhelmingly expensive as the dimension of the state space grows. The value function associated to the feedback law has been written in a Functional Tensor Train form and its approximation has been enriched by the knowledge of both the value function and its gradient at specific sampling points. Synthetic data generation has been performed using two different methods: Pontryagin's Maximum Principle and the State Dependent Riccati Equation approach. Through different numerical tests we have shown that the SDRE-based regression performs more accurately and efficiently, whereas PMP can still be necessary in  the case of state/control constraints. The numerical tests have shown that the introduction of gradient-enhanced supervised learning methodology yields the following advantages with respect to the no-gradient formulation:
\begin{itemize}
\item the algorithm presents more stability in presence of noise,
\item the $H^1$ norm of the error is better controlled,
\item improved performance for control-constrained cases,
\item the feedback map appears more regular and can be integrated with larger time steps,
\item it is characterised by a lower standard deviation on the trial set,
\item the $Error/Evals$ cost for different $\theta$ and dimensions is always minimized picking $\lambda \neq 0$.
\end{itemize}

We also showed that the maximum TT rank in the representation of the values function grows almost linearly, yielding a effective mitigation of the curse of dimensionality.

In the future we aim at coupling the proposed algorithm with Model Order Reduction techniques in order to deal with problems in considerably higher dimension such as fluid flow control. Since we are not restricted to consider a reduced space in a very low dimension, we are able to work with challenging problems using an extended reduced order basis, leading to a more accurate control design. Further extensions include the study of robust controllers through differential games and Hamilton-Jacobi-Isaacs PDEs as in \cite{KKK20}, by resorting to representation via SDREs \cite{allasdre,DOLGOV2022510}, and data-driven tensor approximation for stochastic control problems in the spirit of \cite{Han_Jentzen_E_2018}.

\section*{Acknowledgement}
This research was supported by Engineering and Physical Sciences Research Council (EPSRC) grants EP/V04771X/1 and EP/T024429/1.
For the purpose of open access, the author has applied a Creative Commons Attribution (CC-BY) licence to any Author Accepted Manuscript version arising.

\noindent\textbf{Data access.} Matlab codes implementing the gradient cross and numerical examples are available at \url{https://github.com/saluzzi/TT-Gradient-Cross}.

\appendix

\section{Supplementary materials for the multidimensional Gradient Cross}
\label{appendixA}

\subsection{Resolution of the multidimensional least squares problem}
\label{appendixA1}
We report here the computation for the resolution of the regression problem treated in Section \ref{sec:multiTT}.

Deriving the normal equation for the problem \eqref{LSd} we obtain
\begin{equation}
\left(A_< \otimes M \otimes M_> + M_< \otimes A \otimes M_> +  M_< \otimes M \otimes A_> \right) \mathrm{vec}(H^{(k)}) = \vv{F}
\label{normal_eq}
\end{equation}

where
\begin{align*}
A_< & = \sum_{i=1}^{k-1} \lambda_i\left( \partial_i G^{(<k)} \left(\overline{X}_{<k}\right)\right)^{\top} \left( \partial_i G^{(<k)} \left(\overline{X}_{<k}\right)\right), & M_< & =  G^{(<k)} \left(\overline{X}_{<k}\right)^{\top} G^{(<k)} \left(\overline{X}_{<k}\right), \\
A & = \lambda_0 \Phi_k\left(X_k\right)^{\top}\Phi_k\left(X_k\right) + \lambda_k \Phi'_k\left(X_k\right)^{\top}\Phi'_k\left(X_k\right), & M &=\Phi_k\left(X_k\right)^{\top} \Phi_k\left(X_k\right), \\
A_< &= \sum_{i=k+1}^{d} \lambda_i\left( \partial_i G^{(>k)} \left(\overline{X}_{>k}\right)\right)^{\top} \left( \partial_i G^{(>k)} \left(\overline{X}_{>k}\right)\right), & M_> & =  G^{(>k)} \left(\overline{X}_{>k}\right)^{\top} G^{(>k)} \left(\overline{X}_{>k}\right),
\end{align*}

\begin{align*}
\vv{F}& = \sum_{i=1}^{k-1} \lambda_i \left[\partial_i G^{(<k)} \left(\overline{X}_{<k}\right)^{\top} \otimes \Phi^{\top}_k \otimes G^{(>k)}\left(\overline{X}_{>k}\right)^{\top} \right] \vv{V}^k_i+ \lambda_0 \left[ G^{(<k)} \left(\overline{X}_{<k}\right)^{\top} \otimes \Phi^{\top}_k \otimes G^{(>k)}\left(\overline{X}_{>k}\right)^{\top} \right]\vv{V}^k_0 \\
& + \lambda_k \left[ G^{(<k)} \left(\overline{X}_{<k}\right)^{\top} \otimes \Phi'^{\top}_k \otimes G^{(>k)}\left(\overline{X}_{>k}\right)^{\top} \right]\vv{V}^k_k+  \sum_{i=k+1}^{d} \lambda_i \left[ G^{(<k)} \left(\overline{X}_{<k}\right)^{\top} \otimes \Phi^{\top}_k \otimes \partial_i G^{(>k)}\left(\overline{X}_{>k}\right)^{\top} \right] \vv{V}^k_i .
\end{align*}

To solve the normal equation \eqref{normal_eq} we first compute a generalized diagonalization for the three couples of Gram matrices $(M_<,A_<)$, $(M,A)$ and $(M_>,A_>)$,
$$
M_<V_<=A_< V_< L_<\,, \quad MV=A V L\,, \quad  M_>V_>=A_> V_> L_>\,,
$$
then equation \eqref{normal_eq} can be rewritten in the following form 

\begin{equation}
\left(A_< \otimes A \otimes A_> \right) \left(V_< \otimes V \otimes V_> \right) \cdot L_3  \cdot \left(V^{\top}_< \otimes V^{\top} \otimes V^{\top}_> \right) \left(A_< \otimes A \otimes A_> \right)  \mathrm{vec}(H^{(k)}) = \vv{F}
\label{normal_eq2}
\end{equation}

where $L_3=\left(L_< \otimes L \otimes \mathtt{I} + L_< \otimes \mathtt{I} \otimes L_>+\mathtt{I} \otimes L \otimes L_> \right)$ is diagonal. Now the linear system \eqref{normal_eq2} is easily solvable by inverting individual terms under the Kronecker products, and the diagonal matrix.

\subsection{Update of the core evaluations}
\label{appendixA2}

As stated in Section \ref{sec:multiTT}, in the $(k+1)$-th step we can evaluate the core $G^{(<k+1)}\left(\overline{X}_{<k+1}\right)$ with a cost independent of $k$ and $d$.
Indeed, for each of the $r_k$ elements in $I_k^{\alpha}$ and $I_k^x$ we compute the matrix products

\begin{align}\label{eq:G_k+1}
G^{(<k+1)}\left(\overline{X}_{<k+1}(\alpha_k)\right) &= G^{(<k)}\left(\overline{X}_{<k}(I_k^{\alpha}(\alpha_k))\right) G^{(k)}\left(X_{k}(I_k^x(\alpha_k))\right), \\\nonumber
\partial_i G^{(<k+1)}\left(\overline{X}_{<k+1}(\alpha_k)\right) & =  \partial_i G^{(<k)}\left(\overline{X}_{<k}(I_k^{\alpha}(\alpha_k))\right)  G^{(k)}\left(X_{k}(I_k^x(\alpha_k))\right), &  i& =1,\ldots,k-1, \\\nonumber
\partial_k G^{(<k+1)}\left(\overline{X}_{<k+1}(\alpha_k)\right) & = G^{(<k)}\left(\overline{X}_{<k}(I_k^{\alpha}(\alpha_k))\right)  \partial_k  G^{(k)}\left(X_{k}(I_k^x(\alpha_k))\right), & \alpha_k & =1,\ldots,r_k,
\end{align}
where $G^{(<k)}\left(\overline{X}_{<k}\right)$ and $\partial_i G^{(<k)}\left(\overline{X}_{<k}\right)$ are available from the previous step, and need only slicing at $I_k^{\alpha}(\alpha_k)$.

\bibliography{biblio}
\bibliographystyle{abbrv}

\end{document}